\documentclass[12pt,leqno,a4paper]{amsart}
\usepackage{graphicx}
\usepackage{amsmath,amssymb,amsthm}
\usepackage{enumitem}
\usepackage[margin=3cm,top=4cm,bottom=3.5cm,footskip=1cm,headsep=1.5cm]{geometry}
\usepackage{amsaddr}
\usepackage[utf8]{inputenc}
\usepackage{color}

\def\blue{\color{blue}}

\author{H. Egger$^*$ \and B. Hofmann$^\dag$}
\address{$^*$Department of Mathematics, TU Darmstadt, Germany\\%
$^\dag$Faculty of Mathematics, TU Chemnitz, Germany}
\email{egger@mathematik.tu-darmstadt.de}
\email{hofmannb@mathematik.tu-chemnitz.de}

\title[Tikhonov regularization under conditional stability]{Tikhonov regularization in Hilbert scales\\ under conditional stability assumptions}


\newtheorem{lemma}{Lemma}[section]

\newtheorem{theorem}[lemma]{Theorem}

\theoremstyle{definition}
\newtheorem{remark}[lemma]{Remark}

\def\RR{\mathbb{R}}
\def\ZZ{\mathbb{Z}}

\def\D{\mathcal{D}}
\def\X{\mathcal{X}}
\def\Y{\mathcal{Y}}

\def\xdag{x^\dag}
\def\xad{x_\alpha^\delta}
\def\xand{x_{\alpha_n}^\delta\!}
\def\xansd{x_{\alpha_{n^*}}^\delta\!}
\def\yd{y^\delta}

\def\cad{c_\alpha^\delta}
\def\cdag{c^\dag}
\def\fad{f_\alpha^\delta}
\def\fdag{f^\dag}

\def\M{\mathcal{M}}

\numberwithin{equation}{section}

\begin{document}


\begin{abstract}
Conditional stability estimates allow us to characterize the degree of
ill-posedness of many inverse problems, but without further
assumptions they are not sufficient for the stable solution in the
presence of data perturbations. We here consider the stable solution
of nonlinear inverse problems satisfying a conditional stability
estimate by Tikhonov regularization in Hilbert scales. Order optimal
convergence rates are established for a-priori and a-posteriori
parameter choice strategies. The role of a hidden source condition is
investigated and the relation to previous results for regularization
in Hilbert scales is elaborated. The applicability of the results is
discussed for some model problems, and the theoretical results are
illustrated by numerical tests.
\end{abstract}

\maketitle

\begin{quote}
\noindent
{\small {\bf Keywords:}
Nonlinear inverse problems,
conditional stability,
Tikhonov regularization,
discrepancy principle,
convergence rates,
Hilbert scales}
\end{quote}

\begin{quote}
\noindent
{\small {\bf AMS-classification (2010):}
47J06, 49N45, 65J22, 35R30
}
\end{quote}

\section{Introduction} \label{sec:intro}

We consider nonlinear operator equations of the abstract form
\begin{align}  \label{eq:ip}
F(x)= y\,,
\end{align}
modeling inverse problems with a nonlinear forward operator $F:\D(F) \subset \X \to \Y$  mapping between Hilbert spaces $\X$ and $\Y$. We assume that $y=F(\xdag)$ for some $\xdag \in \D(F)$, i.e., the exact data $y$ result from
an element $\xdag$ in the domain of $F$, which we call the true solution of \eqref{eq:ip}.
In practice, only noisy data $\yd \in \Y$ are available and we try to approximate the solution $\xdag$ from knowledge of $\yd$ under the assumption that a deterministic bound for the noise level is available, i.e.
\begin{align} \label{eq:a1}
\|\yd-y\|_\Y \le \delta. \tag{A1}
\end{align}
As a consequence of the \emph{smoothing} properties of $F$, which are typical for inverse problems, we must expect that the
equation \eqref{eq:ip} is \emph{locally ill-posed} at $\xdag$ in the sense of \cite[Def.~3]{HofPla18}, and some sort of regularization is required in order to determine a stable approximation for $\xdag$.
In this paper, we consider Tikhonov regularization in Hilbert scales and analyze its convergence under a conditional stability assumption for the inverse problem.

In order to state this assumption and the resulting regularization scheme, we use a Hilbert scale $\{\X_s\}_{s \in \RR}$ generated by a densely defined injective self-adjoint linear operator $L : \D(L) \subset \X \to \X$ with compact inverse. The elements of $\X_s$ have finite norm $\|x\|_{\X_s} := \|L^s x\|_{\X}$.

The basic assumption for the rest of the manuscript is that the operator $F$ satisfies a conditional stability estimate of the following form:
There exists a function \linebreak $R : \RR_+ \to \RR_+$ and real numbers $a \ge 0$ and $s,\gamma$ with $-a \le s$ and $0<\gamma \le 1$ such that
\begin{align} \label{eq:a2}
\|x_1-x_2\|_{\X_{-a}} \le R(\rho) \|F(x_1)-F(x_2)\|_{\Y}^\gamma
\qquad \text{for all } x_1,x_2 \in \M_\rho^s\,,\tag{A2}
\end{align}
where the set $\M_\rho^s$ is defined by
$\;\M_\rho^s:=\{x \in \D(F) \cap \X_s:\, \|x\|_{\X_s} \le \rho\}$.

No additional properties of $F$ and $\D(F)$ apart from assumption (A2) will be required for our analysis.
In particular, $F$ may not be differentiable or even discontinuous.

For the stable solution of the inverse problem \eqref{eq:ip} in the presence of data noise,
we utilize Tikhonov regularization in Hilbert scales which is based on minimization of the regularized least-squares functional
\begin{align} \label{eq:tik}
T(x;\alpha,\yd) := \|F(x) - \yd\|^2_\Y + \alpha \|x\|_{\X_s}^2.
\end{align}
As regularized approximations for the solution of \eqref{eq:ip}, we consider approximate minimizers $\xad$ of the functional \eqref{eq:tik}, i.e., arbitrary elements from the set
\begin{align} \label{eq:Xad}
\X(\alpha;\yd) :=\{x \in \D(F) : T(x;\alpha,\yd) \le \inf_{z \in \D(F)} T(z;\alpha,\yd) + \delta^2\}.
\end{align}
The following convergence rate result can then be deduced in a similar way than the more general statement of \cite[Thm.~2.1]{ChengYamamoto00}. We present here a reformulation adopted to our setting and provide a short proof for later reference.
\begin{theorem}{\em \bf (compare with \cite[Thm.~2.1]{ChengYamamoto00} and \cite[Prop.~6.9]{HofmannYamamoto10}).}  \label{thm:previous} $ $\\
Let \eqref{eq:a1}--\eqref{eq:a2} hold and assume that $\xdag\in \D(F) \cap \X_s$.
Then for any $\xad \in \X(\alpha;\yd)$ with $\alpha=\delta^2$ there holds
\begin{align*}
\|F(\xad)-\yd\|_\Y \le C_1 \delta
\qquad \text{and} \qquad
\|\xad - \xdag\|_{\X_{-a}}  \le C_2 \delta^{\gamma}
\end{align*}
with constants $C_1,C_2$ depending only on the size of $\|\xdag\|_{\X_s}$.
Moreover, $\|\xad\|_{\X_{s}} \le C_1$.
\end{theorem}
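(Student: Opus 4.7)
The plan is to exploit the approximate minimality of $\xad$ to produce simultaneous control of the residual $\|F(\xad)-\yd\|_\Y$ and of the penalty term $\|\xad\|_{\X_s}$, and then to invoke the conditional stability estimate (A2) to convert the residual bound into an error estimate in $\X_{-a}$.

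The first step is to compare the Tikhonov functional at $\xad$ with its value at $\xdag$. Since $\xdag\in\D(F)\cap\X_s$ and $\|y-\yd\|_\Y\le\delta$ by (A1), we have $T(\xdag;\alpha,\yd)\le\delta^2+\alpha\|\xdag\|_{\X_s}^2$. The definition \eqref{eq:Xad} of $\X(\alpha;\yd)$ then yields
\begin{equation*}
\|F(\xad)-\yd\|_\Y^2+\alpha\|\xad\|_{\X_s}^2 \;=\; T(\xad;\alpha,\yd)\;\le\;2\delta^2+\alpha\|\xdag\|_{\X_s}^2.
\end{equation*}
Setting $\alpha=\delta^2$ and splitting the two terms on the left produces at once $\|F(\xad)-\yd\|_\Y\le C_1\delta$ and $\|\xad\|_{\X_s}^2\le 2+\|\xdag\|_{\X_s}^2$, where $C_1$ depends only on $\|\xdag\|_{\X_s}$. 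This already delivers the first and third claimed inequalities.

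For the $\X_{-a}$-error bound I would apply (A2) to the pair $x_1=\xad$, $x_2=\xdag$. The previous step shows that both elements lie in $\M_\rho^s$ as soon as $\rho:=(2+\|\xdag\|_{\X_s}^2)^{1/2}$, so (A2) is applicable with this (data-independent) radius. Combining it with the triangle inequality $\|F(\xad)-F(\xdag)\|_\Y\le\|F(\xad)-\yd\|_\Y+\|\yd-y\|_\Y\le(C_1+1)\delta$, we obtain
\begin{equation*}
\|\xad-\xdag\|_{\X_{-a}}\;\le\; R(\rho)\,\|F(\xad)-F(\xdag)\|_\Y^\gamma\;\le\; R(\rho)\,(C_1+1)^\gamma\delta^\gamma\;=\;C_2\delta^\gamma,
\end{equation*}
which is the remaining assertion.

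There is no real obstacle in this proof: everything is a bookkeeping exercise once one notices that the choice $\alpha=\delta^2$ makes the two summands in $T(\xad;\alpha,\yd)$ enter symmetrically. The only delicate point worth checking is that the admissible set $\M_\rho^s$ in (A2) contains both $\xdag$ and $\xad$ for a $\rho$ depending only on $\|\xdag\|_{\X_s}$, which is exactly what the penalty bound above provides; no smoothness, continuity, or differentiability of $F$ enters the argument.
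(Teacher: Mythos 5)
Your proposal is correct and follows essentially the same argument as the paper's proof: compare $T(\xad;\alpha,\yd)$ with $T(\xdag;\alpha,\yd)$ up to the $\delta^2$ slack in \eqref{eq:Xad}, set $\alpha=\delta^2$ to bound the residual and the penalty simultaneously with $\rho=C_1=(2+\|\xdag\|_{\X_s}^2)^{1/2}$, and then apply \eqref{eq:a2} together with the triangle inequality. No differences worth noting.
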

\begin{proof}
Set $M=\|\xdag\|_{\X_s}$.
Then from the definition of $\xad$ and the choice $\alpha=\delta^2$ for the regularization parameter,
one can deduce that
\begin{align*}
\|F(\xad)-\yd\|^2_\Y + \alpha \|\xad\|^2_{\X_s}
&\le \|F(\xdag) - \yd\|^2_\Y + \alpha \|\xdag\|_{\X_s}^2 + \delta^2\\
&\le 2 \delta^2 + \alpha M^2 = C_1^2 \delta^2
\end{align*}
with constant $C_1^2 := 2 + M^2$.
This already yields the first estimate of the theorem and also provides the following bound for the minimizers
\begin{align*}
\|\xad\|_{\X_s}^2 \le C_1^2 \delta^2/\alpha = C_1^2.
\end{align*}
Hence $\xad,\xdag \in \D(F) \cap \X_s$ with $\|\xad\|_{\X_s},\|\xdag\|_{\X_s} \le C_1$.
From assumption \eqref{eq:a2}, we can then infer that
\begin{align*}
\|\xad-\xdag\|_{\X_{-a}}
&\le R(C_1) \|F(\xad) - F(\xdag)\|_\Y^\gamma \\
&\le R(C_1) (\|F(\xad)-\yd\|_\Y + \|y-\yd\|_\Y)^\gamma
 \le R(C_1) (C_1+1)^\gamma \delta^\gamma.
\end{align*}
This yields the second estimate with constant $C_2=R(C_1) (C_1+1)^\gamma$.
\end{proof}

The proof of the theorem is rather simple and can be extended to Banach spaces and more general conditional stability assumptions and distance measures; let us refer to \cite[Thm.~2.1]{ChengYamamoto00} and \cite[Thm.~1]{ChengHofmannLu14} for details in this direction.
The assertions of the theorem are however quite remarkable, namely:
\begin{itemize}\itemsep0ex
 \item The assumptions on the forward operator $F$ are very general. In particular, no further conditions concerning the {\sl continuity} or {\sl nonlinearity} of $F$ apart from \eqref{eq:a2} are required.
       It should be mentioned, however, that \eqref{eq:a2} implies the injectivity of $F$ with respect to the subset $\M_\rho^s$ of $\X$ and consequently uniqueness of the solution $\xdag$ to equation \eqref{eq:ip} in $\M_\rho^s$. The number $a \ge 0$ can be considered as the \emph{degree of ill-posedness} for the inverse problem under consideration.
 \item Only the simple {\sl source condition} $\xdag \in \D(F) \cap \X_s$ is used,
       but no a-priori bound for $\|\xdag\|_{\X_s}$ is required in the formulation of the method and its analysis.
       Note that this source condition together with the stability condition \eqref{eq:a2}, but without knowledge of a bound for $\|\xdag\|_{\X_s}$, is not sufficient to ensure convergence of approximate solutions without additional regularization.
       %
 \item The a-priori parameter choice $\alpha=\delta^2$ already yields $\|\xad-\xdag\|_{\X_{-a}} = O(\delta^\gamma)$ which can be seen to be {\sl order optimal} under the given assumptions.
\end{itemize}
%

Without much difficulty, one can also establish convergence rates in intermediate norms.
Let us recall the well-known interpolation inequality in Hilbert scales
\begin{align} \label{eq:intpol}
\|x\|_{\X_{q}} \le \|x\|_{\X_{p}}^{\frac{r-q}{r-p}} \|x\|_{\X_r}^{\frac{q-p}{r-p}}, \qquad p \le q \le r.
\end{align}
From the results of Theorem~\ref{thm:previous}, we can then deduce the following estimates.
\begin{theorem} \label{thm:rate}
Under the assumptoins of Theorem~\ref{thm:previous}, one has
\begin{align} \label{eq:rate}
\|\xad-\xdag\|_{\X_r} \le C_3 \delta^{\gamma \frac{s-r}{s+a}} \quad \text{for all } -a \le r \le s ,
\end{align}
with constant $C_3$ depending again only on the size of $\|\xdag\|_{\X_s}$.
\end{theorem}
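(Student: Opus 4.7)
The plan is to prove the rate in $\X_r$ by interpolating between the bound in $\X_{-a}$ already provided by Theorem~\ref{thm:previous} and a boundedness estimate in $\X_s$. Since the hypotheses of Theorem~\ref{thm:previous} are in force, I obtain at once the two ingredients I need: the convergence rate $\|\xad-\xdag\|_{\X_{-a}} \le C_2\,\delta^\gamma$ and the a priori bounds $\|\xad\|_{\X_s}\le C_1$ and $\|\xdag\|_{\X_s}\le M$. By the triangle inequality, the latter two combine into a uniform bound $\|\xad-\xdag\|_{\X_s}\le C_1+M=:C_4$, which depends only on the size of $\|\xdag\|_{\X_s}$, as required.

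Next I would apply the interpolation inequality \eqref{eq:intpol} in the Hilbert scale with the choice $p=-a$, $q=r$, and the upper index equal to $s$. The hypothesis $-a\le r\le s$ guarantees that this choice is admissible and that both resulting exponents $\tfrac{s-r}{s+a}$ and $\tfrac{r+a}{s+a}$ lie in $[0,1]$. This gives
\begin{equation*}
\|\xad-\xdag\|_{\X_r} \le \|\xad-\xdag\|_{\X_{-a}}^{\frac{s-r}{s+a}}\,\|\xad-\xdag\|_{\X_s}^{\frac{r+a}{s+a}}.
\end{equation*}

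Substituting the two bounds obtained above yields
\begin{equation*}
\|\xad-\xdag\|_{\X_r} \le \bigl(C_2\,\delta^\gamma\bigr)^{\frac{s-r}{s+a}}\,C_4^{\frac{r+a}{s+a}} = C_3\,\delta^{\gamma\frac{s-r}{s+a}}
\end{equation*}
with $C_3 := C_2^{\frac{s-r}{s+a}}\,C_4^{\frac{r+a}{s+a}}$, which depends only on $\|\xdag\|_{\X_s}$ through $C_1$, $C_2$ and $M$. This is precisely \eqref{eq:rate}.

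There is no real obstacle here; the argument is a one-line application of interpolation once the two endpoint estimates are in hand. The only points to check carefully are that both exponents are nonnegative (ensured by $-a\le r\le s$) and that the $\X_s$-bound really applies to the difference $\xad-\xdag$ (which is immediate from the two individual bounds already recorded in Theorem~\ref{thm:previous}). The endpoints $r=-a$ and $r=s$ of the claimed range reproduce, respectively, the second estimate of Theorem~\ref{thm:previous} and the trivial bound by $C_4$, confirming consistency.
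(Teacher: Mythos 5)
Your proposal is correct and follows essentially the same route as the paper: combine the $\X_{-a}$ rate and the $\X_s$ boundedness from Theorem~\ref{thm:previous} (the paper uses $\|\xad-\xdag\|_{\X_s}\le 2C_1$, you use $C_1+M$, which is the same thing since $M\le C_1$) and conclude by the interpolation inequality \eqref{eq:intpol} with $p=-a$, $q=r$ and upper index $s$. No gaps.
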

\begin{proof}
By Theorem~\ref{thm:previous}, we know that $\|\xdag\|_{\X_s},\|\xad\|_{\X_s} \le C_1$ and consequently
\begin{align*}
\|\xad-\xdag\|_{\X_s} \le 2C_1 \qquad \text{and} \qquad \|\xad-\xdag\|_{\X_{-a}} \le C_2 \delta^{\gamma}.
\end{align*}
The assertion then follows immediately from the interpolation estimate \eqref{eq:intpol}.
\end{proof}
Theorems~\ref{thm:previous} and \ref{thm:rate} can be seen as an extension of the convergence results for \emph{regularization in Hilbert scales} that were obtained in \cite{Neubauer92} and \cite{Tautenhahn94} under stronger assumptions on the operator $F$;
see also \cite{Natterer84} or \cite[Section~8.4]{EnglHankeNeubauer96} for linear problems.
In view of these results, one might hope to obtain improved rates
\begin{align} \label{eq:rate2}
\|\xad - \xdag\|_{\X_{r}} \le C_4 \delta^{\gamma \frac{u-r}{u+a}} \quad \text{for} \quad -a \le r \le u,
\end{align}
if the solution satisfies a stronger source condition $\xdag \in \D(F) \cap \X_u$ for some $u \ge s$.

This is indeed not unrealistic:
If the smoothness index $u$ of the solution would be known, one might choose $s=u$ in the regularization term
of the Tikhonov functional \eqref{eq:tik} and also in all previous results.
The improved convergence rate estimate \eqref{eq:rate2} would then follow directly from Theorem~\ref{thm:rate}.
In practice one can, however, not assume knowledge of the smoothness index $u$,
and we therefore stay with the Tikhonov functional defined in \eqref{eq:tik}.
In this case the simple parameter choice $\alpha=\delta^2$ however yields only the suboptimal rates \eqref{eq:rate}
instead of \eqref{eq:rate2} in general.
The main focus of our paper will therefore be to
\begin{itemize}\itemsep0ex
 \item devise an appropriate a-priori parameter choice rule and prove the optimal convergence rates for this choice when $\xdag \in \X_u$;
 \item propose an a-posteriori parameter choice strategy that yields the optimal rates without knowing the smoothness index $u$ of the true solution;
\item discuss the relation to previous results for regularization in Hilbert scales.
\end{itemize}
For illustration of our results, the estimates of Theorems~\ref{thm:previous} and \ref{thm:rate}
and the new estimates obtained in the paper are depicted
in Figure~\ref{fig:rates}.
\begin{figure}[ht!]
\hspace*{2em}\includegraphics[width=0.6\textwidth]{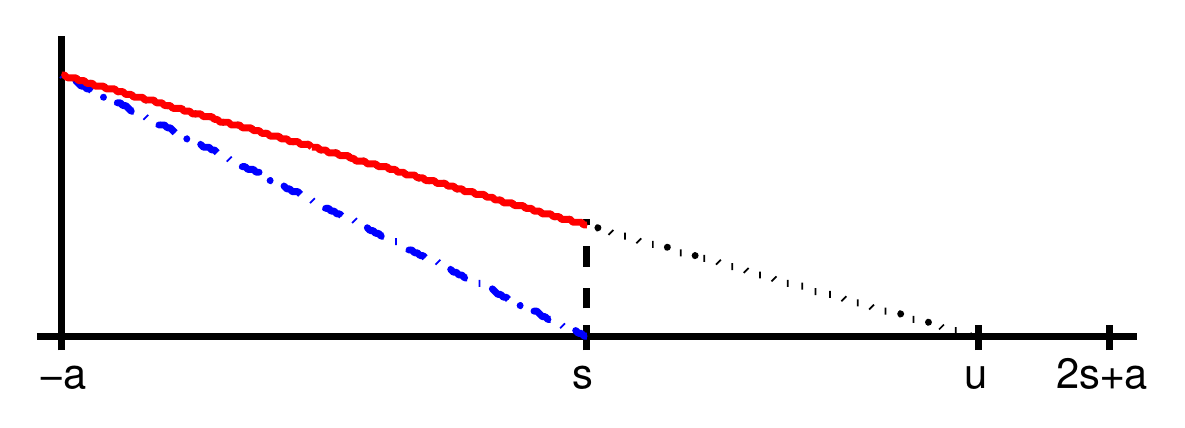}
\put(-313,42){\color{blue} \small \bf Theorems \, \ref{thm:previous} and \ref{thm:rate}}
\put(-170,62){\color{red} \small \bf Theorems~\ref{thm:improve} and \ref{thm:improve2}}
\put(-3,17){\color{black} \small $\bf r$}
\put(-253,73){\color{black} \small $\kappa$}
\put(-245,76){\thicklines \line(1,0){5}}
\caption{Exponents $0 \le \kappa \le 1$ of $\|\xad-\xdag\|_{\X_r}=O(\delta^\kappa)$ as $\delta~\to~0$ under the assumption that $\xdag \in \D(F) \cap \X_u$.
Estimate \eqref{eq:rate} of~Theorems~\ref{thm:previous} and \ref{thm:rate} (blue, dashed);
estimate \eqref{eq:rate3} obtained by a-priori and a-posteriori parameter choice in Theorems~\ref{thm:improve} and \ref{thm:improve2} (red).
\label{fig:rates}}
\end{figure}
Note that we obtain better convergence rates than predicted by Theorem~\ref{thm:previous}, whenever $\xdag \in \X_u$ with $u > s$.
Let us also mention that similar results were obtained by Tautenhahn \cite{Tautenhahn94} under stronger assumptions on the problem, i.e., under a Lipschitz stability condition for the inverse problem, further nonlinearity conditions on the operator $F$, and using an exact minimization of the Tikhonov functional in the definition of $\xad$.

The remainder of the manuscript is organized as follows:
In Sections~\ref{sec:main} and \ref{sec:proofs}, we state and prove our main results.
The relation to previous results on regularization in Hilbert scales will be discussed in Section~\ref{sec:relation}.
Section~\ref{sec:problems} presents two model problems, for which our
results apply, and in Section~\ref{sec:numerics} we illustrate our theoretical results by numerical tests.
The presentation closes with a short discussion of open topics.

\section{Statement of the main results} \label{sec:main}

We now state our main results, which can be seen as a generalization and improvement of the estimates of Theorem~\ref{thm:previous} for smooth solutions $\xdag \in \X_u$ with $u > s$.
\begin{theorem}[A-priori parameter choice] \label{thm:improve} $ $\\
Let \eqref{eq:a1}--\eqref{eq:a2} hold and assume that $\xdag \in \D(F) \cap \X_u$ for some $-a \le s \le u \le 2s+a$.
Then for any $\xad \in \X(\alpha;\yd)$ with $\alpha = \delta^{2-2 \gamma \frac{u-s}{u+a}}$ one has
\begin{align} \label{eq:rate3}
\|\xad - \xdag\|_{\X_r} \le C_5 \delta^{\gamma \frac{u-r}{u+a}}
\qquad \text{for all } -a \le r \le s
\end{align}
and $\|F(\xad)-\yd\|_\Y \le C_6 \delta$ with $C_5,C_6$ depending only on the norm $\|\xdag\|_{\X_u}$
\end{theorem}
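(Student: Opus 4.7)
The plan is to refine the proof of Theorem~\ref{thm:previous} by exploiting the stronger smoothness $\xdag\in\X_u$ through a variational inequality, so as to improve the factor $\alpha\|\xdag\|_{\X_s}^2$ on the right of the Tikhonov identity. Writing $e=\xad-\xdag$ and expanding $\|\xad\|_{\X_s}^2=\|\xdag\|_{\X_s}^2+2\langle e,\xdag\rangle_{\X_s}+\|e\|_{\X_s}^2$, the defining inequality of $\xad\in\X(\alpha;\yd)$ becomes
\begin{align*}
A^2+\alpha\|e\|_{\X_s}^2 \le 2\delta^2+2\alpha|\langle e,\xdag\rangle_{\X_s}|,
\qquad A:=\|F(\xad)-\yd\|.
\end{align*}
Self-adjointness of $L$ lets me recast $\langle e,\xdag\rangle_{\X_s}=\langle L^{2s-u}e,L^u\xdag\rangle_{\X}$; this step uses the hypothesis $u\le 2s+a$ precisely to guarantee $2s-u\ge -a$, so Cauchy--Schwarz together with the interpolation inequality~\eqref{eq:intpol} applied with $\theta=(u-s)/(s+a)$ delivers
\begin{align*}
|\langle e,\xdag\rangle_{\X_s}|
\le \|\xdag\|_{\X_u}\,\|e\|_{\X_{-a}}^\theta\|e\|_{\X_s}^{1-\theta}.
\end{align*}

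As in Theorem~\ref{thm:previous}, the same optimality inequality (together with the embedding $\X_u\hookrightarrow\X_s$) yields a uniform bound $\|\xad\|_{\X_s}\le C$ for small $\delta$, because $\delta^2/\alpha=\delta^\beta\le 1$ with $\beta:=2\gamma(u-s)/(u+a)$. Hence $\xad,\xdag\in\M_\rho^s$ for a fixed $\rho$, assumption~\eqref{eq:a2} applies with a fixed constant $R$, and $\|e\|_{\X_{-a}}\le R(A+\delta)^\gamma$. Substituting everything above produces
\begin{align*}
A^2+\alpha\|e\|_{\X_s}^2 \le 2\delta^2+C\alpha(A+\delta)^{\gamma\theta}\|e\|_{\X_s}^{1-\theta},
\end{align*}
and I would apply Young's inequality with exponents $p=2/(1-\theta)$, $q=2/(1+\theta)$ to absorb the factor $\|e\|_{\X_s}^{1-\theta}$ into the term $\alpha\|e\|_{\X_s}^2$ on the left. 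A direct computation confirms the crucial identity $2\gamma\theta/(1+\theta)=\beta$, so the absorbed inequality takes the form
\begin{align*}
A^2+c\,\alpha\|e\|_{\X_s}^2\le 2\delta^2+C\alpha(A+\delta)^\beta.
\end{align*}

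Plugging the induced bound $\|e\|_{\X_s}^{1-\theta}\le C(A+\delta)^{\beta(1-\theta)/2}$ back into the original estimate on $A^2$ (and reusing the same identity to simplify the exponent) reduces everything to the single scalar inequality $A^2\le 2\delta^2+C\alpha(A+\delta)^\beta$ in $A$, which together with the choice $\alpha=\delta^{2-\beta}$ forces $A\le C\delta$ by a short case distinction (either $A\le 2\delta$, or $A^{2-\beta}\le C\delta^{2-\beta}$). Inserting this back gives $\|e\|_{\X_s}\le C\delta^{\beta/2}=C\delta^{\gamma(u-s)/(u+a)}$ and $\|e\|_{\X_{-a}}\le C\delta^\gamma$, and the full estimate~\eqref{eq:rate3} for $-a\le r\le s$ then follows by applying~\eqref{eq:intpol} to $e$, a routine computation confirming that the interpolated exponent collapses to $\gamma(u-r)/(u+a)$.

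The main obstacle is the Young-inequality step: the exponents have to be chosen so that the power of $\|e\|_{\X_s}$ matches $2$ on the left while the companion exponent $2\gamma\theta/(1+\theta)$ acting on $(A+\delta)$ coincides with $\beta$, which is precisely what makes $\alpha=\delta^{2-\beta}$ balance every term on the right against $\delta^2$. Tracking all constants so that the final $C_5,C_6$ depend only on $\|\xdag\|_{\X_u}$ (through the uniform bound on $\|\xad\|_{\X_s}$ used to fix the stability constant $R$) is the other point that requires care.
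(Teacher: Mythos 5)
Your proposal is correct and follows essentially the same route as the paper's proof (via Lemma~\ref{lem:rate}): expand the penalty term, pair $\xdag\in\X_u$ against $\|\xad-\xdag\|_{\X_{2s-u}}$, interpolate between $\X_{-a}$ and $\X_s$, invoke \eqref{eq:a2} after securing the uniform $\X_s$-bound, and absorb via Young's inequality, with the exponent identity $2\gamma\theta/(1+\theta)=2\gamma\frac{u-s}{u+a}$ doing the same work as the paper's exponent $\frac{u+a}{u+a-\gamma(u-s)}$. The only cosmetic difference is that the paper uses a three-factor Young inequality to absorb the residual power into the left-hand side as well, whereas you keep the term $C\alpha(A+\delta)^\beta$ and close the argument with a scalar case distinction in $A$ (valid since $\beta\le\gamma<2$); both variants yield the same constants depending only on $\|\xdag\|_{\X_u}$.
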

For the limiting case $u=s$, we recover the assertions of Theorem~\ref{thm:previous}.
Also note that Theorem~\ref{thm:improve} is yet not fully practical, since the a-priori parameter choice still requires knowledge
of the smoothness index $u$ of the true solution which is not known in practice.
An a-posteriori parameter choice strategy that does not require such knowledge is the discrepancy principle \cite{Morozov84}. Here we consider the following strategy adopted to our setting:
For $n \ge 0$ set $\alpha_n = 2^{-n}$ and choose $\xand \in X(\alpha_n;\yd)$.
Based on this sequence $\{\xand\}_{n \ge 0}$, we define
\begin{align} \label{eq:stopping}
n^* := \inf\{n\ge 0: \|F(\xand)-\yd\|_\Y \le 4 \delta\},
\end{align}
and we consider $\alpha=\alpha_{n^*}$ as the regularization parameter for determing a stable approximation for the solution
of the inverse problem \eqref{eq:ip}.
This parameter choice strategy is computationally attractive since one can start generation
of $\xand$ with large $\alpha_n$, proceed to smaller $\alpha_n$, and stop as soon as the data residual
reaches the desired tolerance.
With this simple procedure, we already obtain the following result.
\begin{theorem}[A-posteriori parameter choice] \label{thm:improve2} $ $\\
Let \eqref{eq:a1}--\eqref{eq:a2} hold, define $n^*$, $\alpha_{n^*}$ as above,
and assume that $\xdag \in \D(F) \cap \X_u$ for some $-a \le s \le u \le 2s+a$.
Then for any $\xad \in \X(\alpha_{n^*};\yd)$ the rates \eqref{eq:rate3} hold
with a constant $C_5$ depending only on $\|\xdag\|_{\X_u}$.
Moreover, we have $\|F(\xad)-\yd\|_\Y \le 4\delta$.
\end{theorem}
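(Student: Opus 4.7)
The plan is to reduce the a-posteriori statement to the reasoning behind Theorem~\ref{thm:improve} by showing that the discrepancy principle selects a parameter $\alpha_{n^*}$ that is, from below, comparable to the a-priori optimal choice $\alpha^\star = \delta^{2-2\kappa}$ with $\kappa := \gamma(u-s)/(u+a)$. Once the lower bound $\alpha_{n^*} \ge c\,\delta^{2-2\kappa}$ is in hand, the same Young-type inequality that underlies the proof of Theorem~\ref{thm:improve}, combined with the discrepancy identity $\|F(\xansd)-\yd\|_\Y \le 4\delta$, delivers the claimed rate.

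First I would dispose of the elementary bookkeeping. The Tikhonov-minimality inequality applied with the comparison element $\xdag \in \X_s$, exactly as in the proof of Theorem~\ref{thm:previous}, gives
\begin{equation*}
\|F(\xand) - \yd\|_\Y^2 + \alpha_n \|\xand\|_{\X_s}^2 \le 2\delta^2 + \alpha_n \|\xdag\|_{\X_s}^2,
\end{equation*}
so the residual drops below $4\delta$ as soon as $\alpha_n \le 14\delta^2/\|\xdag\|_{\X_s}^2$, whence $n^*$ is finite; conversely, the strict inequality at $n = n^*-1$ forces $\alpha_{n^*-1} > 14\delta^2/\|\xdag\|_{\X_s}^2$ and hence $\alpha_{n^*} \ge c\,\delta^2$. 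This crude lower bound already ensures that the iterates for $n \in \{n^*-1, n^*\}$ have $\X_s$-norm uniformly bounded by a constant depending only on $\|\xdag\|_{\X_s}$, so they belong to some $\M_\rho^s$ on which the conditional stability estimate \eqref{eq:a2} is available.

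The main obstacle is upgrading this to $\alpha_{n^*} \ge c\,\delta^{2-2\kappa}$ by exploiting the extra smoothness $\xdag \in \X_u$. Expanding $\|\xand\|_{\X_s}^2$ around $\xdag$ in the Tikhonov inequality produces
\begin{equation*}
\|F(\xand) - \yd\|_\Y^2 + \alpha_n \|\xand - \xdag\|_{\X_s}^2 \le 2\delta^2 + 2\alpha_n \bigl|\langle L^s\xdag,\, L^s(\xdag - \xand)\rangle_\X\bigr|;
\end{equation*}
I would then bound the inner product by duality, $|\langle L^s\xdag, L^s v\rangle_\X| \le \|\xdag\|_{\X_u}\|v\|_{\X_{2s-u}}$, control $\|v\|_{\X_{2s-u}}$ via the interpolation inequality \eqref{eq:intpol} with $\theta = (u-s)/(s+a)$ as a product of $\|\xand-\xdag\|_{\X_{-a}}^{\theta}$ and $\|\xand - \xdag\|_{\X_s}^{1-\theta}$, and estimate $\|\xand-\xdag\|_{\X_{-a}} \le C(D_n + \delta)^\gamma$ through \eqref{eq:a2}, with $D_n := \|F(\xand)-\yd\|_\Y$. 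A weighted Young inequality absorbs the residual $\|\xand - \xdag\|_{\X_s}$-factor into the left-hand side, yielding the autonomous relation
\begin{equation*}
D_n^2 \le 2\delta^2 + C\alpha_n (D_n + \delta)^{2\kappa}.
\end{equation*}
Evaluated at $n = n^* - 1$ with $D_{n^*-1} > 4\delta$, the $2\delta^2$-term can be absorbed and one obtains $D_{n^*-1}^{2-2\kappa} \le C\alpha_{n^*-1}$; combining with $D_{n^*-1} > 4\delta$ yields $\alpha_{n^*-1} \ge c\,\delta^{2-2\kappa}$, and hence the desired bound $\alpha_{n^*} \ge c'\,\delta^{2-2\kappa}$.

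The rate itself then follows exactly as in the a-priori analysis. The discrepancy inequality $D_{n^*} \le 4\delta$ together with \eqref{eq:a2} directly gives $\|\xansd - \xdag\|_{\X_{-a}} \le C\delta^\gamma$, while the same Young-type estimate applied at $n = n^*$ yields $\tfrac12\alpha_{n^*}\|\xansd - \xdag\|_{\X_s}^2 \le 2\delta^2 + C\alpha_{n^*}\delta^{2\kappa}$, so that
\begin{equation*}
\|\xansd - \xdag\|_{\X_s}^2 \le 4\delta^2/\alpha_{n^*} + C\delta^{2\kappa} \le C'\delta^{2\kappa}
\end{equation*}
thanks to the improved lower bound on $\alpha_{n^*}$. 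The full range $-a \le r \le s$ is then covered by the interpolation inequality \eqref{eq:intpol}, the exponent computation being identical to the one in Theorem~\ref{thm:rate} and collapsing to $\gamma(u-r)/(u+a)$. The residual statement $\|F(\xad)-\yd\|_\Y \le 4\delta$ is, for $\xad = \xansd$, just the defining property of $n^*$.
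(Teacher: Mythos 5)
Your proposal is correct and follows essentially the same route as the paper: a crude lower bound $\alpha_{n^*}\gtrsim\delta^2$ from the minimality inequality (Lemma~\ref{lem:apriori}), an improved bound $\alpha_{n^*}\gtrsim\delta^{2-2\gamma\frac{u-s}{u+a}}$ obtained by applying the duality--interpolation--stability--Young estimate at $n^*-1$ where the residual still exceeds $4\delta$ (Lemma~\ref{lem:apriori2}), and then the same estimate at $n^*$ combined with the discrepancy bound and interpolation (Lemmas~\ref{lem:rate} and~\ref{lem:rate2}). The only cosmetic difference is that you keep the residual explicit in the relation $D_n^2\le 2\delta^2+C\alpha_n(D_n+\delta)^{2\kappa}$ rather than converting it into a pure power of $\alpha_n$ via the three-factor Young inequality, which in fact makes the derivation of the improved lower bound (left implicit in the paper) more transparent.
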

Note that no information about the actual smoothness of $\xdag$ is required in order to carry out the computations.
Theorems~\ref{thm:improve} and \ref{thm:improve2} therefore yield a generalization and
a true improvement of Theorem~\ref{thm:previous} for smooth solutions $\xdag \in \X_u$ with $u>s$.

%

\section{Proof of the main results} \label{sec:proofs}

This section is devoted to the proofs of Theorems~\ref{thm:improve} and \ref{thm:improve2}.
Throughout this section, we will always assume that assumptions \eqref{eq:a1}--\eqref{eq:a2} are valid
and we assume that $-a \le s \le u \le 2s+a$.
Let us start with some preliminary observations and auxiliary a-priori estimates.
From the definition of the set $\X(\alpha;\yd)$ and the estimates in the proof of Theorem~\ref{thm:previous},
we directly obtain the following results.
\begin{lemma} \label{lem:nonempty}
Let $\xdag \in \D(F) \cap \X_s$. Then $\X(\alpha;\yd)$ is non-empty for any $\alpha \ge 0$.
Moreover, we have $\xdag \in \X(\alpha;\yd)$ if $\alpha$ is chosen sufficiently small.
\end{lemma}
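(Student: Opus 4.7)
The lemma has two parts, and I would prove them in sequence. For the non-emptiness of $\X(\alpha;\yd)$, my plan is to use $\xdag$ as a witness that the Tikhonov functional has a finite infimum, and then extract an approximate minimizer via the definition of infimum. Specifically, from $\xdag \in \D(F) \cap \X_s$ together with \eqref{eq:a1},
\[
T(\xdag;\alpha,\yd) = \|y-\yd\|_\Y^2 + \alpha \|\xdag\|_{\X_s}^2 \le \delta^2 + \alpha \|\xdag\|_{\X_s}^2 < \infty,
\]
so the quantity $g(\alpha) := \inf_{z \in \D(F)} T(z;\alpha,\yd)$ is finite. Since the slack $\delta^2$ in the definition of $\X(\alpha;\yd)$ is strictly positive, the defining property of infimum then immediately produces some $z_* \in \D(F)$ with $T(z_*;\alpha,\yd) \le g(\alpha) + \delta^2$, i.e., $z_* \in \X(\alpha;\yd)$.

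For the assertion $\xdag \in \X(\alpha;\yd)$ when $\alpha$ is small, the plan is to verify the defining inequality $T(\xdag;\alpha,\yd) \le g(\alpha) + \delta^2$ directly. Combining the same bound $T(\xdag;\alpha,\yd) \le \delta^2 + \alpha \|\xdag\|_{\X_s}^2$ with the trivial lower bound $g(\alpha) \ge 0$, the inequality reduces to
\[
\alpha \|\xdag\|_{\X_s}^2 \le (\delta^2 - \|y-\yd\|_\Y^2) + g(\alpha),
\]
in which both summands on the right are non-negative. For any $\alpha$ small enough that the left-hand side is dominated by the right-hand side, $\xdag$ is then an admissible element of $\X(\alpha;\yd)$.

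The main obstacle, such as it is, lies in turning the phrase \emph{sufficiently small} into a quantitative positive threshold $\alpha_0 > 0$: this requires at least one of the two non-negative summands above to be strictly positive, i.e.\ either strict inequality in \eqref{eq:a1} or a positive distance from $\yd$ to the closure of $F(\D(F))$. In the fully degenerate case where both vanish one still has $\xdag \in \X(0;\yd)$ by direct inspection, so the conclusion remains valid under the convention $\alpha_0 \ge 0$; this is all that is needed in the subsequent analysis, where the precise size of $\alpha_0$ never enters.
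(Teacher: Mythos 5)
Your argument is correct and follows essentially the same elementary route the paper intends: the paper gives no explicit proof, stating only that the lemma follows directly from the definition of $\X(\alpha;\yd)$ and the bound $T(\xdag;\alpha,\yd)\le \delta^2+\alpha\|\xdag\|_{\X_s}^2$ already used in the proof of Theorem~\ref{thm:previous}, which is exactly what you spell out (finiteness of the infimum plus the slack $\delta^2>0$ for non-emptiness, and the comparison $T(\xdag;\alpha,\yd)\le \inf_z T(z;\alpha,\yd)+\delta^2$ for small $\alpha$). Your observation about the degenerate case $\|y-\yd\|_\Y=\delta$ with $\yd$ in the closure of $F(\D(F))$ is a genuine refinement the paper glosses over — there the admissible threshold can indeed collapse to $\alpha_0=0$ — but, as you note, this is harmless since only the non-emptiness assertion is used in the subsequent analysis.
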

From the estimates derived in the proof of Theorem~\ref{thm:previous}, we also obtain the following
a-priori estimates for the data residuals and the approximate minimizers.
\begin{lemma} \label{lem:bounds}
Let $\xdag \in \D(F) \cap \X_s$ with $\|\xdag\|_{\X_s}=M$ and let $\tau>0$ be arbitrary.
Then
\begin{itemize}\itemsep1ex
\item[(i)] $\|F(\xad) - \yd\|_\Y^2 \le (2+\tau) \delta^2$ for all $\alpha \le \tau \delta^2/M^2$;
\item[(ii)] $\|\xad\|_{\X_s}^2 \le (1+\frac{2}{\tau}) M^2$ when $\alpha \ge \tau \delta^2/M^2$.
\end{itemize}
\end{lemma}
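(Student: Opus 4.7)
The plan is to start from the one inequality that defines membership in $\X(\alpha;\yd)$, namely
$$\|F(\xad)-\yd\|_\Y^2 + \alpha \|\xad\|_{\X_s}^2 \le T(\xdag;\alpha,\yd) + \delta^2,$$
and exploit the bound on $T(\xdag;\alpha,\yd)$ that comes from the noise assumption \eqref{eq:a1}. Since $y=F(\xdag)$ and $\|y-\yd\|_\Y \le \delta$, the right-hand side is dominated by $2\delta^2 + \alpha M^2$, exactly as in the first display of the proof of Theorem~\ref{thm:previous}. This master inequality,
$$\|F(\xad)-\yd\|_\Y^2 + \alpha \|\xad\|_{\X_s}^2 \le 2\delta^2 + \alpha M^2,$$
is the only ingredient needed, and both assertions follow by keeping one term on the left and choosing how to balance the two terms on the right using the range of $\alpha$.

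For assertion (i) I would drop the nonnegative term $\alpha\|\xad\|_{\X_s}^2$ on the left and use the hypothesis $\alpha \le \tau\delta^2/M^2$ on the right to bound $\alpha M^2 \le \tau \delta^2$, giving the claimed $(2+\tau)\delta^2$. For assertion (ii) I would drop the residual term on the left, divide through by $\alpha>0$, and apply the hypothesis $\alpha \ge \tau\delta^2/M^2$ in the reverse direction, i.e.\ $\delta^2/\alpha \le M^2/\tau$, which yields $\|\xad\|_{\X_s}^2 \le 2M^2/\tau + M^2 = (1+\tfrac{2}{\tau})M^2$.

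There is no real obstacle here: the statement is essentially a quantitative bookkeeping of the two regimes of $\alpha$ relative to the natural scale $\delta^2/M^2$, and the only care needed is that $\xdag \in \D(F)\cap\X_s$ is an admissible competitor in the definition of $\X(\alpha;\yd)$ (guaranteed by Lemma~\ref{lem:nonempty}) so that the starting inequality applies. The remark worth making at the end of the proof is that these two bounds are the workhorse for analyzing both the a-priori choice in Theorem~\ref{thm:improve} and the discrepancy-based choice in Theorem~\ref{thm:improve2}: (i) certifies that the residual stays at the noise level whenever $\alpha$ is small enough, while (ii) certifies a uniform source-type bound in $\X_s$ whenever $\alpha$ is not too small.
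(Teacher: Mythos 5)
Your proposal is correct and is exactly the argument the paper intends: the lemma is stated as a direct consequence of the master inequality $\|F(\xad)-\yd\|_\Y^2+\alpha\|\xad\|_{\X_s}^2\le 2\delta^2+\alpha M^2$ derived in the proof of Theorem~\ref{thm:previous}, and your two case distinctions in $\alpha$ reproduce the intended bookkeeping precisely.
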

The parameter $\tau$ was introduced here only to have some flexibility that will be required in our proofs further below.
In order to establish the improved convergence rates \eqref{eq:rate3}, we will need the following more accurate a-priori estimates.
\begin{lemma} \label{lem:rate}
Let $\xdag \in \D(F) \cap \X_u$ with $s < u \le 2s+a$ and let $\xad \in \X(\alpha;\yd)$ for some parameter $\alpha \ge C \delta^2$ with $C>0$ only depending on $\|\xdag\|_{\X_u}$. Then
\begin{align} \label{eq:apriori}
\|F(\xad) - F(\xdag)\|_\Y^2 + \alpha \|\xad - \xdag\|_{\X_s}^2 \le 12 \delta^2 + C_7 \alpha^{\frac{u+a}{u+a-\gamma(u-s)}}
\end{align}
with constant $C_7$ only depending on the size  of $\|\xdag\|_{\X_u}$.
\end{lemma}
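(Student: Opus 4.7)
The plan is to start from the approximate-minimality inequality defining $\X(\alpha;\yd)$, which combined with $\|F(\xdag)-\yd\|_\Y \le \delta$ yields $\|F(\xad)-\yd\|_\Y^2 + \alpha\|\xad\|_{\X_s}^2 \le 2\delta^2 + \alpha\|\xdag\|_{\X_s}^2$. Expanding $\|\xad\|_{\X_s}^2 = \|\xad-\xdag\|_{\X_s}^2 + 2\langle \xad-\xdag,\xdag\rangle_{\X_s} + \|\xdag\|_{\X_s}^2$ and invoking $\|F(\xad)-F(\xdag)\|_\Y^2 \le 2\|F(\xad)-\yd\|_\Y^2 + 2\delta^2$, the $\|\xdag\|_{\X_s}^2$ terms cancel and the task reduces to controlling the cross term $\alpha\langle \xdag-\xad, \xdag\rangle_{\X_s}$.

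The higher smoothness $\xdag \in \X_u$ enters through the Hilbert-scale shift identity $\langle \xdag-\xad, \xdag\rangle_{\X_s} = \langle L^{2s-u}(\xdag-\xad), L^u\xdag\rangle_\X$, which is legitimate since $-a \le 2s-u \le s$ under the hypothesis $s \le u \le 2s+a$. Cauchy--Schwarz followed by the interpolation inequality \eqref{eq:intpol} between $\X_{-a}$ and $\X_s$ then gives
\begin{equation*}
|\langle \xdag-\xad,\xdag\rangle_{\X_s}| \le \|\xdag\|_{\X_u}\,\|\xad-\xdag\|_{\X_{-a}}^\theta \|\xad-\xdag\|_{\X_s}^{1-\theta}, \qquad \theta = \tfrac{u-s}{s+a} \in [0,1].
\end{equation*}
To replace $\|\xad-\xdag\|_{\X_{-a}}$ via \eqref{eq:a2} by $R(\rho)\|F(\xad)-F(\xdag)\|_\Y^\gamma$, both $\xad$ and $\xdag$ must lie in some $\M_\rho^s$; this is precisely where the hypothesis $\alpha \ge C\delta^2$ enters through Lemma~\ref{lem:bounds}(ii), and since $\|\xdag\|_{\X_s}$ is controlled by $\|\xdag\|_{\X_u}$ in the Hilbert scale, $\rho$ and hence $R(\rho)$ depend only on $\|\xdag\|_{\X_u}$. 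Assembling the pieces produces
\begin{equation*}
\tfrac12\|F(\xad)-F(\xdag)\|_\Y^2 + \alpha\|\xad-\xdag\|_{\X_s}^2 \le 3\delta^2 + K\,\alpha\, \|F(\xad)-F(\xdag)\|_\Y^{\gamma\theta} \|\xad-\xdag\|_{\X_s}^{1-\theta}
\end{equation*}
with $K$ depending only on $\|\xdag\|_{\X_u}$.

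The final step is a weighted three-factor Young inequality applied after rewriting the mixed product as $K\alpha^{(1+\theta)/2} \cdot \|F(\xad)-F(\xdag)\|_\Y^{\gamma\theta} \cdot (\alpha^{1/2}\|\xad-\xdag\|_{\X_s})^{1-\theta}$, with conjugate exponents $p_1 = 2/(\gamma\theta)$, $p_2 = 2/(1-\theta)$, and $p_3 = 2/(1+\theta(1-\gamma))$; the third exponent is admissible precisely because $\gamma\theta/2 + (1-\theta)/2 \le 1/2 < 1$. With sufficiently small weights, the first two Young contributions absorb into the LHS and leave a residue proportional to $\alpha^{(1+\theta)p_3/2}$, yielding the claimed bound. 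I expect the exponent arithmetic in this last step to be the main obstacle: one must check that $(1+\theta)p_3/2$ simplifies to exactly $\frac{u+a}{u+a-\gamma(u-s)}$, which follows from the identities $1+\theta = (u+a)/(s+a)$ and $1+\theta(1-\gamma) = (u+a-\gamma(u-s))/(s+a)$. Once this bookkeeping is carried out, tracking the constants (in particular choosing the weights so that only $\tfrac14 A^2 + \tfrac12\alpha B^2$ remains to be absorbed, producing the factor $12$ on the $\delta^2$ term) completes the argument; the rest is a routine chain of Hilbert-scale manipulations.
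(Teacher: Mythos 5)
Your proposal is correct and follows essentially the same route as the paper's proof: the approximate-minimality inequality plus the expansion of the penalty term leading to the cross term $2\alpha(\xdag,\xdag-\xad)_{\X_s}$, the shift to $\|\xdag\|_{\X_u}\|\xad-\xdag\|_{\X_{2s-u}}$ by Cauchy--Schwarz, interpolation between $\X_{-a}$ and $\X_s$ combined with \eqref{eq:a2}, and the same three-factor Young inequality (your exponents $p_1,p_2,p_3$ written in terms of $\theta=\frac{u-s}{s+a}$ coincide with the paper's $r,q,p$, and your exponent arithmetic giving $\frac{u+a}{u+a-\gamma(u-s)}$ checks out). The only cosmetic difference is the choice of absorption weights, which still yields the factor $12$ on $\delta^2$.
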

\begin{proof}
From the lower bound bound on $\alpha$ stated in the assumptions, the definition of $\xad$, and the second estimate of Lemma~\ref{lem:bounds}, one can see that $\xad \in \D(F) \cap \X_s$ with $\|\xad\|_{\X_s} \le M_1$ for some constant $M_1$ only depending on the norm $\|\xdag\|_{\X_u}$.
With similar arguments as in the proof of \cite[Thm.~10.4]{EnglHankeNeubauer96},
we then get
\begin{align} \label{eq:est}
\frac{1}{2} \|F(\xad) &- F(\xdag)\|_\Y^2 + \alpha \|\xad - \xdag\|^2_{\X_s} \\
&\le  \|F(\xad) - \yd\|_\Y^2 + \delta^2 + \alpha \|\xad - \xdag\|^2_{\X_s} \notag\\
&\le \|F(\xdag)-\yd\|^2_\Y + 2 \delta^2 + 2\alpha (\xdag,\xdag-\xad)_{\X_s} \notag\\
&\le  3 \delta^2 + 2 \alpha \|\xdag\|_{\X_u} \|\xad - \xdag\|_{\X_{2s-u}}. \notag
\end{align}
Here we used \eqref{eq:a1} in the first inequality,
the definition of $\xad$ and some basic computations for the second estimate,
and assumption \eqref{eq:a1} as well as a Cauchy-Schwarz-type inequality in the third step.
Application of the interpolation inequality \eqref{eq:intpol} with $p=-a$, $q=2s-u$, and $r=s$,
further noting that $\|\xad\|_{\X_s}\le M_1$ and $\|\xdag\|_{\X_s} \le \|\xdag\|_{\X_u} \le M_1$ w.l.o.g.,
and using the stability estimate \eqref{eq:a2} yields
\begin{align*}
\|\xad - \xdag\|_{\X_{2s-u}}
&\le \|\xad-\xdag\|_{\X_{-a}}^{\frac{u-s}{s+a}} \|\xad - \xdag\|_{\X_{s}}^{\frac{2s+a-u}{s+a}} \\
&\le R(M_1)^{\frac{u-s}{s+a}}  \|F(\xad)-F(\xdag)\|_\Y^{\gamma\frac{u-s}{s+a}} \|\xad - \xdag\|_{\X_{s}}^{\frac{2s+a-u}{s+a}} .
\end{align*}
This allows us to bound the last term in the estimate \eqref{eq:est} by
\begin{align} \label{eq:est2}
&2 \alpha \|\xdag\|_{\X_u} \|\xad - \xdag\|_{\X_{2s-u}}\\
&\le \big(C' \alpha^\frac{u+a}{2s+2a} \|\xdag\|_{\X_u}\big) \cdot \big(\alpha^\frac{2s+a-u}{2s+2a} \|\xad-\xdag\|_{\X_s}^{\frac{2s+a-u}{s+a}}\big) \cdot \big(\|F(\xad)-F(\xdag)\|^{\gamma \frac{u-s}{s+a}}\big), \notag
\end{align}
with constant $C'$ only depending on $\|\xdag\|_{\X_u}$.
By Young's inequality, one can see that
\begin{align*}
a \cdot b \cdot c \le \frac{a^p}{p} + \frac{b^q}{q} + \frac{c^r}{r}
\end{align*}
for all $a,b,c \ge 0$ and $1 \le p,q,r \le \infty$ with $ \frac{1}{p}+\frac{1}{q}+\frac{1}{r} =1$.
Applying this inequality with indices
$p=\frac{2s+2a}{u+a-\gamma(u-s)}$, $q=\frac{2s+2a}{2s+a-u}$, and $r=\frac{2}{\gamma}\frac{s+a}{u-s}$ in the estimate \eqref{eq:est2} leads to
\begin{align*}
2 \alpha \|\xdag\|_{\X_u} &\|\xad - \xdag\|_{\X_{2s-u}}\\
&\le C'' \alpha^\frac{u+a}{u+a-\gamma(u-s)} + \tfrac{3\alpha}{4} \|\xad - \xdag\|_{\X_s}^2 + \tfrac{1}{4} \|F(\xad)-F(\xdag)\|^2_\Y,
\end{align*}
with $C''$ depending on $\|\xdag\|_{\X_u}$ and on the indices $a,s,u$, and $\gamma$.
The last two terms can then be absorbed in the left hand side of \eqref{eq:est}.
This yields the estimate of the lemma with constant $C_7=4 C''$ and concludes the proof.
\end{proof}

\paragraph{\bf Proof of Theorem~\ref{thm:improve}.}
A combination of Lemma~\ref{lem:rate} with the a-priori parameter choice $\alpha = \delta^{2-2 \gamma \frac{u-s}{u+a}}$ yields $\|F(\xad)-F(\xdag)\|_\Y \le C \delta$ and $\|\xad-\xdag\|_{\X_s} \le C' \delta^{\gamma \frac{u-s}{u+a}}$. Using assumption \eqref{eq:a2}, we also obtain $\|\xad-\xdag\|_{\X_{-a}} \le C'' \delta^\gamma$.
By the interpolation inequality \eqref{eq:intpol}, we then obtain $\|\xad-\xdag\|_{\X_r} \le C_5 \delta^{\gamma \frac{u-r}{u+a}}$, and using \eqref{eq:a1} and the triangle inequality yields $\|F(\xad)-\yd\|_\Y \le C_6 \delta$. \qed

\bigskip

Let us now turn to the proof of Theorem~\ref{thm:improve2}.
As a consequence of Lemma~\ref{lem:nonempty}, we can find for any $n \ge 0$ an approximate solution $\xand \in \X(\alpha_n;\yd)$
where $\alpha_n=2^{-n}$.
Using Lemma~\ref{lem:bounds}, one can see that $n^*$ is well-defined by the stopping rule \eqref{eq:stopping} and finite,
once a sequence of approximate solutions $\xand \in \X(\alpha_n;\yd)$ has been chosen.
In addition, we have the following preliminary lower bound.
\begin{lemma} \label{lem:apriori}
Let $n^*$ be chosen by \eqref{eq:stopping}.
Then $\alpha_{n^*} \ge 7\delta^2/M^2$
and $\|\xansd\|_{\X_s}^2 \le 2 M^2$ for all $\xad \in \X(\alpha_{n^*};\yd)$.
As before, $M=\|\xdag\|_{\X_s}$ is the norm of the true solution.
\end{lemma}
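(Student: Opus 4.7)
The plan is to combine the two assertions of Lemma~\ref{lem:bounds} with the definition of the stopping index $n^*$ in \eqref{eq:stopping}.

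First I would establish the lower bound $\alpha_{n^*} \ge 7\delta^2/M^2$ by exploiting the minimality of $n^*$. Assume for the moment that $n^* \ge 1$. Then, by definition of $n^*$, the preceding iterate $x_{\alpha_{n^*-1}}^\delta$ must violate the discrepancy criterion, i.e.
\[
\|F(x_{\alpha_{n^*-1}}^\delta)-\yd\|_\Y > 4\delta,
\qquad \text{equivalently} \qquad
\|F(x_{\alpha_{n^*-1}}^\delta)-\yd\|_\Y^2 > 16\delta^2.
\]
Reading Lemma~\ref{lem:bounds}(i) in its contrapositive form with $\tau=14$, this forces $\alpha_{n^*-1} > 14\delta^2/M^2$. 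Since $\alpha_n = 2^{-n}$ and hence $\alpha_{n^*} = \alpha_{n^*-1}/2$, the desired bound $\alpha_{n^*} > 7\delta^2/M^2$ follows. The remaining case $n^*=0$ corresponds to $\alpha_{n^*}=1$, which automatically satisfies $\alpha_{n^*} \ge 7\delta^2/M^2$ for all noise levels $\delta \le M/\sqrt{7}$, i.e., in the asymptotic regime that is of interest for the convergence analysis.

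Having secured $\alpha_{n^*} \ge 7\delta^2/M^2$, the norm bound is then an immediate consequence of Lemma~\ref{lem:bounds}(ii), applied with $\tau = 7$ and to any $\xad \in \X(\alpha_{n^*};\yd)$: one obtains
\[
\|\xansd\|_{\X_s}^2 \le \Bigl(1 + \tfrac{2}{7}\Bigr) M^2 = \tfrac{9}{7} M^2 \le 2 M^2,
\]
which is the second claim of the lemma.

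There is no real obstacle here; the only subtle point is recognizing that the useful information at the stopping index is the failure of the discrepancy test \emph{at the previous index}, which feeds the contrapositive of Lemma~\ref{lem:bounds}(i) and produces the $14 = 2\cdot 7$ that, after halving $\alpha_{n^*-1}$, leaves exactly the factor $7$ needed to activate Lemma~\ref{lem:bounds}(ii) with a constant strictly below $2$.
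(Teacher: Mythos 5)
Your proof is correct and follows essentially the same route as the paper: the contrapositive of Lemma~\ref{lem:bounds}(i) with $\tau=14$ applied at index $n^*-1$, halving to get $\alpha_{n^*} \ge 7\delta^2/M^2$, and then Lemma~\ref{lem:bounds}(ii) with $\tau=7$. Your explicit treatment of the $n^*=0$ case is a small point of added care that the paper's own proof omits.
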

\begin{proof}
From Lemma~\ref{lem:bounds}, we deduce that $\|F(\xand)-\yd\|_\Y > 4 \delta$ can only happen, when $\alpha_n > 14 \delta^2/M^2$.
This condition must particularly hold for $n=n^*-1$, which already implies the lower bound for $\alpha_{n^*}$.
The bound for $\|\xansd\|_{\X_s}$ then follows from the second assertion of Lemma~\ref{lem:bounds} with $\tau=7$.
\end{proof}
As a direct consequence of the estimates of Lemma~\ref{lem:rate},
we obtain the following sharper bound for the regularization parameter from below.
\begin{lemma} \label{lem:apriori2}
The regularization parameter determined by \eqref{eq:stopping} satisfies
\begin{align*}
\alpha_{n^*} \ge C_8 \delta^{2-2\gamma\frac{u-s}{a+u}}
\end{align*}
with constant $C_8$ only depending on the norm $\|\xdag\|_{\X_u}$ of the exact solution.
\end{lemma}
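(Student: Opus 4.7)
My plan is a quantitative contradiction using the defining property of $n^*$. If $n^*=0$, the claim is trivial since $\alpha_{n^*}=1$ and the exponent $2-2\gamma(u-s)/(u+a)$ is nonnegative. So assume $n^*\ge 1$. By the minimality of $n^*$ in~\eqref{eq:stopping}, the stopping test is violated at $n^*-1$, giving $\|F(x_{\alpha_{n^*-1}}^\delta)-\yd\|_\Y^2 > 16\delta^2$. Simultaneously, Lemma~\ref{lem:apriori} gives $\alpha_{n^*-1}=2\alpha_{n^*}\ge 14\delta^2/M^2$ with $M=\|\xdag\|_{\X_s}\le\|\xdag\|_{\X_u}$, so the hypothesis of Lemma~\ref{lem:rate} is met at $\alpha=\alpha_{n^*-1}$.

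The key ingredient I need to pair with this lower bound is an upper estimate of the form
\begin{align*}
\|F(\xad)-\yd\|_\Y^2 \;\le\; C_1\,\delta^2 + C_2\,\alpha^{\frac{u+a}{u+a-\gamma(u-s)}},
\end{align*}
with $C_1$ strictly less than $16$. Unfortunately, combining Lemma~\ref{lem:rate} with $\|F(\xad)-\yd\|_\Y^2\le 2\|F(\xad)-F(\xdag)\|_\Y^2 + 2\delta^2$ only yields $C_1=26$, which is too loose to contradict $16\delta^2$. The remedy is a small refinement inside the proof of Lemma~\ref{lem:rate}: starting from the tighter intermediate estimate $\|F(\xad)-\yd\|_\Y^2+\alpha\|\xad-\xdag\|_{\X_s}^2\le 2\delta^2+2\alpha(\xdag,\xdag-\xad)_{\X_s}$ rather than from the first line of~\eqref{eq:est}, apply the Young-type estimate~\eqref{eq:est2} and then, before absorbing, replace $\tfrac14\|F(\xad)-F(\xdag)\|_\Y^2$ by $\tfrac12\|F(\xad)-\yd\|_\Y^2+\tfrac12\delta^2$ and absorb the first summand into the left-hand side. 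This produces the display above with $C_1=5$ and $C_2$ depending only on $\|\xdag\|_{\X_u}$.

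Combining the lower bound $16\delta^2$ at $n^*-1$ with this upper bound at $\alpha=\alpha_{n^*-1}$ gives $11\delta^2 < C_2\,\alpha_{n^*-1}^{(u+a)/(u+a-\gamma(u-s))}$, and solving yields $\alpha_{n^*-1}\ge C'\delta^{2-2\gamma(u-s)/(u+a)}$. Halving gives the claim with $C_8=C'/2$. The principal obstacle is not the structure of the argument, which is simply \emph{use (a refined) Lemma~\ref{lem:rate} to contradict the stopping rule at step $n^*-1$}, but the bookkeeping of constants: the factor $12$ appearing in the statement of Lemma~\ref{lem:rate} is too loose for a direct triangle-inequality conversion, and one has to go back into its proof and redistribute the Young-inequality terms so that the $\delta^2$-coefficient of the residual bound lies strictly below the $16\delta^2$ threshold implicit in the stopping rule~\eqref{eq:stopping}.
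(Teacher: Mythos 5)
Your proof is correct and follows the route the paper intends: the paper states Lemma~\ref{lem:apriori2} as a ``direct consequence'' of Lemma~\ref{lem:rate} without writing out a proof, and the argument is precisely to play the violated stopping test at $n^*-1$ (residual ${}>4\delta$, hence $\alpha_{n^*-1}\ge 14\delta^2/M^2$ by Lemma~\ref{lem:apriori}, so Lemma~\ref{lem:rate} applies) against the upper bound $\delta^2+\alpha^{\frac{u+a}{u+a-\gamma(u-s)}}$-type estimate, and then halve. Your observation that the boxed form of Lemma~\ref{lem:rate} is too weak for a literal application --- the triangle inequality only gives $9\delta^2<12\delta^2+C_7\alpha^{\kappa}$, which is vacuous --- is a fair catch, and your repair (rerunning the chain \eqref{eq:est} with the data residual $\|F(\xad)-\yd\|^2$ kept on the left and redistributing the Young weights so its coefficient in the resulting bound stays below $16$) is exactly the right fix and is available inside the paper's own proof of Lemma~\ref{lem:rate}.
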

As a final ingredient for the proof of Theorem~\ref{thm:improve2},
we require the following estimate for a term that already appeared in the
proof of Lemma~\ref{lem:rate}.
\begin{lemma} \label{lem:rate2}
Assume that $\|F(\xad)-\yd\|_\Y \le 4\delta$ and $\alpha \ge C_8 \delta^{2-2\gamma\frac{u-s}{a+u}}$.
Then
\begin{align*}
2 \|\xdag\|_{\X_u} \|\xad-\xdag\|_{\X_{2s-u}}
\le C_9 \delta^{2\gamma \frac{u-s}{a+u}} + \tfrac{a+2s-u}{2a+2s} \|\xad-\xdag\|^2_{\X_s}
\end{align*}
with constant $C_9$ depending only on the norm $M=\|\xdag\|_{\X_u}$ of the exact solution.
\end{lemma}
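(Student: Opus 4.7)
The plan is to revisit the chain of estimates from the proof of Lemma~\ref{lem:rate} but, instead of the product of three factors that was treated there by a three-term Young inequality, to exploit the a posteriori information $\|F(\xad)-\yd\|_\Y \le 4\delta$. This reduces the bound for $\|\xad-\xdag\|_{\X_{2s-u}}$ to a product of only two nontrivial factors, which can then be handled by the classical two-term Young inequality in such a way that the $\|\xad-\xdag\|_{\X_s}^2$ term comes out with the sharp coefficient $\tfrac{a+2s-u}{2a+2s}$ stated in the lemma.

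First I would invoke Lemma~\ref{lem:bounds}(ii) together with the hypothesis $\alpha \ge C_8 \delta^{2-2\gamma(u-s)/(a+u)}$ to conclude that $\|\xad\|_{\X_s} \le M_1$ for some constant $M_1$ depending only on $M=\|\xdag\|_{\X_u}$; since $\|\xdag\|_{\X_s} \le \|\xdag\|_{\X_u} = M$, both $\xad$ and $\xdag$ then lie in $\M_{M_1}^s$, and assumption \eqref{eq:a2} applies. Combining \eqref{eq:a1} with the a posteriori bound $\|F(\xad)-\yd\|_\Y \le 4\delta$ yields $\|F(\xad)-F(\xdag)\|_\Y \le 5\delta$, so \eqref{eq:a2} gives
\begin{equation*}
\|\xad-\xdag\|_{\X_{-a}} \le R(M_1)\,(5\delta)^\gamma .
\end{equation*}
Then the interpolation inequality \eqref{eq:intpol} with $p=-a$, $q=2s-u$, $r=s$ produces
\begin{equation*}
\|\xad-\xdag\|_{\X_{2s-u}} \le \bigl(R(M_1)(5\delta)^\gamma\bigr)^{\frac{u-s}{s+a}} \|\xad-\xdag\|_{\X_s}^{\frac{2s+a-u}{s+a}} .
\end{equation*}

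Multiplying by $2\|\xdag\|_{\X_u}\le 2M$, I obtain a bound of the form $A\cdot B$ with
\begin{equation*}
A = 2M\,R(M_1)^{\frac{u-s}{s+a}} (5\delta)^{\gamma\frac{u-s}{s+a}} ,
\qquad
B = \|\xad-\xdag\|_{\X_s}^{\frac{2s+a-u}{s+a}} .
\end{equation*}
The key choice is to apply Young's inequality $AB \le A^p/p + B^q/q$ with
\begin{equation*}
q = \frac{2(s+a)}{2s+a-u}, \qquad p = \frac{2(s+a)}{a+u}, \qquad \tfrac{1}{p}+\tfrac{1}{q}=1 .
\end{equation*}
The exponent $q$ is tailored so that $B^q = \|\xad-\xdag\|_{\X_s}^2$, and the coefficient $1/q = \tfrac{a+2s-u}{2a+2s}$ is exactly the one appearing in the lemma. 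The corresponding $A^p/p$ term is then a constant, depending only on $M$ (and on $a,s,u,\gamma$), times $\delta^{p\gamma(u-s)/(s+a)} = \delta^{2\gamma(u-s)/(a+u)}$, which is absorbed into the constant $C_9$.

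The only real subtlety is verifying the uniform norm bound $\|\xad\|_{\X_s}\le M_1$, which requires the lower bound on $\alpha$ from the hypothesis: because $2-2\gamma(u-s)/(a+u)\le 2$, the inequality $\alpha \ge C_8\delta^{2-2\gamma(u-s)/(a+u)}$ implies $\alpha \ge \tau\delta^2/M^2$ for a suitable $\tau=\tau(M,C_8)$ (for sufficiently small $\delta$, or automatically when $u>s$), so Lemma~\ref{lem:bounds}(ii) indeed delivers $M_1$ in terms of $M$ alone. Everything else is bookkeeping of exponents, and the degenerate cases $u=s$ and $u=2s+a$ cause no issue since they reduce respectively to an AM--GM estimate and to a direct application of \eqref{eq:a2}.
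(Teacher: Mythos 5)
Your argument is correct and follows essentially the same route as the paper's proof: the same interpolation step with $p=-a$, $q=2s-u$, $r=s$ and the same two-term Young inequality with conjugate exponents $\tfrac{2(s+a)}{a+u}$ and $\tfrac{2(s+a)}{2s+a-u}$; the only (immaterial) difference is that you substitute the stability bound $\|\xad-\xdag\|_{\X_{-a}}\le R(M_1)(5\delta)^\gamma$ \emph{before} applying Young's inequality rather than after. Your explicit verification that $\|\xad\|_{\X_s}$ is bounded via Lemma~\ref{lem:bounds}(ii) (using the assumed lower bound on $\alpha$), so that \eqref{eq:a2} is actually applicable to the pair $\xad,\xdag$, is a point the paper's proof glosses over and is a welcome addition.
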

\begin{proof}
With similar arguments as in the proof of Lemma~\ref{lem:rate},
we obtain by the interpolation inequality \eqref{eq:intpol} with $p=-a$, $q=2s-u$, $r=s$, the bound
\begin{align*}
&2\|\xdag\|_{\X_u} \|\xad-\xdag\|_{\X_{2s-u}} \\
&\qquad \le 2M\|\xad-\xdag\|^{\frac{u-s}{a+s}}_{\X_{-a}}\|\xad-\xdag\|_{\X_s}^2\big)^{\frac{a+2s-u}{2a+2s}}.
\end{align*}
An application of Young's inequality
$a \cdot b \le \frac{a^p}{p} + \frac{b^q}{q}$
with $p=\frac{2a+2s}{a+u}$ and $q=\frac{2a+2s}{a+2s-u}$ further yields
\begin{align*}
(*)
\le C'(M) \|\xad-\xdag\|_{\X_{-a}}^{2\frac{u-s}{a+u}} + \frac{1}{2} \|\xad-\xdag\|^2_{\X_s}
\end{align*}
with constant $C'(M)$ depending only on $M$. From the stability condition \eqref{eq:a2}
and using $\|\xdag\|_{\X_s} \le \|\xdag\|_{\X_u} = M$,
we deduce that
\begin{align*}
\|\xad-\xdag\|_{\X_{-a}}^{2\frac{u-s}{a+u}}
\le R(M)^{2\frac{u-s}{a+u}} \|F(\xad)-F(\xdag)\|^{2\gamma\frac{u-s}{a+u}}
\le C''(M) \delta^{2\gamma\frac{u-s}{a+u}}.
\end{align*}
Here we employed that $\|F(\xad)-F(\xdag)\|_\Y \le 5 \delta$ which follows from condition \eqref{eq:a1}
and the assumption of the lemma via the triangle inequality.
The assertion of the lemma now follows directly from the previous estimates.
\end{proof}

\noindent
\paragraph{\bf Proof of Theorem~\ref{thm:improve2}.}
From the estimate \eqref{eq:est} which was derived in the proof of Lemma~\ref{lem:rate},
we can deduce that
\begin{align*}
\alpha \|\xad-\xdag\|^2_{\X_s}
\le 2 \delta^2 + 2 \alpha \|\xdag\|_{\X_u} \|\xad-\xdag\|_{\X_{2s-u}}.
\end{align*}
Dividing by $\alpha$ and using Lemma~\ref{lem:rate2} to estimate the last term
yields
\begin{align*}
\tfrac{a+u}{2a+2s} \|\xad-\xdag\|^2_{\X_s} \le 2 \delta^2/\alpha + C_9 \delta^{2\gamma \frac{u-s}{a+u}}.
\end{align*}
From the condition $-a \le s \le u \le 2a+2s$ one can see that $\frac{u+a}{2a+2s} \ge \frac{1}{2}$.
Using this fact, the bound $\alpha=\alpha_{n^*} \ge C_8 \delta^{2-2\gamma \frac{u-s}{a+u}}$ provided by Lemma~\ref{lem:apriori2}, and taking the square root then yields the estimate $\|\xad-\xdag\|_{\X_s} \le C_5 \delta^{\gamma \frac{u-s}{u+a}}$. The estimate $\|\xad-\xdag\|_{\X_{-a}} \le C_5 \delta^{\gamma}$ follows from assumptions \eqref{eq:a2}, \eqref{eq:a2}, and the discrepancy rule, which yield $\|F(\xad)-F(\xdag)\|_\Y \le \|F(\xad)-\yd\|_\Y+\delta \le 5 \delta$. The estimate for $\|\xad-\xdag\|_{\X_r}$ is the again obtained by interpolation.
\qed

\section{Remarks on regularization in Hilbert scales} \label{sec:relation}

Now we are going to recall some previous results about regularization in Hilbert scales and illustrate their relation
to those established in this paper. Let us start with linear inverse problems $T x = y$ with bounded linear forward operators $T$ mapping from $\X$ to $\Y$, and consider data $\yd \in \Y$ satisfying assumption \eqref{eq:a1}.
For the stable solution, we again consider Tikhonov regularization in Hilbert scales for $s \ge 0$ with regularized approximations $\xad$ defined by
\begin{align*}
\xad = \text{argmin} \, \{\|Tx-\yd\|_\Y^2 + \alpha \|x\|^2_{\X_s}\}.
\end{align*}
The basic assumption for the convergence analysis in the linear case is that the operator $T$ admits a two-sided estimate of the form
\begin{align}  \label{eq:twosided}
\underline c \|x\|_{\X_{-a}} \le \|Tx\|_\Y \le \overline c \|x\|_{\X_{-a}},
\end{align}
for some parameter $a \ge 0$ characterizing the degree of ill-posedness of the inverse problem.
If the true solution satisfies $\xdag \in \X_u$ for some $u$ satisfying  $-a \le u \le 2s+a$
and if the regularization parameter is chosen as
\begin{equation} \label{eq:bernd1}
\alpha = \delta^{\frac{2(a+s)}{a+u}}\,,
\end{equation}
then one obtains the optimal convergence rates
\begin{align} \label{eq:bernd2}
\|\xdag-\xad\|_{\X_r} \le C \delta^{\frac{u-r}{a+u}} \qquad \mbox{for all} \qquad -a \le r \le u \le a+2s;
\end{align}
see \cite{Natterer84} or \cite[Rem.~8.24]{EnglHankeNeubauer96} for details.
For $s \le u$ these estimates and also the parameter choice coincide with the ones of Theorem~\ref{thm:improve}
in the case $\gamma=1$, where \eqref{eq:a2} describes the Lipschitz stability of the inverse problem.
Vice versa, assumption \eqref{eq:twosided} also implies the
validity of the condition \eqref{eq:a2} with exponent $\gamma=1$.

\begin{remark} \label{rem:bernd}
The convergence rate result \eqref{eq:bernd2} remains true also in the \emph{oversmoothing} case $s>u \ge -a$,
where $\|\xdag\|_{\X_s}=\infty$; see \cite{HofMat18} for an extension to nonlinear problems.
Also note that in the oversmoothing case with $\alpha$ chosen by \eqref{eq:bernd1}, we have
\begin{align*}
\lim \limits_{\delta \to 0}\frac{\delta^2}{\alpha} =\delta^{\frac{2(u-s)}{a+u}}=+\infty,
\end{align*}
whereas for $u=s,\;\|\xdag\|_{\X_s}< \infty$ and $0<\gamma \le 1$, which is considered in Theorems~\ref{thm:previous} and \ref{thm:rate} as well as e.g.~\cite{ChengYamamoto00},
one obtains
\begin{align*}
\underline c \le  \frac{\delta^2}{\alpha} \le \overline c \quad \mbox{for constants} \quad  0<\underline c \le \overline c < \infty.
\end{align*}
The results of Theorem~\ref{thm:improve} cover the third case $u>s,\;\|\xdag\|_{\X_s}< \infty$, and $0<\gamma \le 1$,
and yield the asymptotic behavior
\begin{align*}
\lim \limits_{\delta \to 0}\frac{\delta^2}{\alpha} =\delta^{\frac{2\gamma(u-s)}{a+u}}=0.
\end{align*}
This is the condition for an a-priori parameter choice that is is required to ensure convergence of Tikhonov regularization in Hilbert spaces; see \cite{EnglHankeNeubauer96}.
\end{remark}

\medskip

Using the interpolation inequality \eqref{eq:intpol} and the first estimate in \eqref{eq:twosided},
we can  also obtain a conditional H\"older stability estimate of the form
\begin{align*}
\|x_1-x_2\|_{\X_{-q}}
\le \|x_1-x_2\|_{\X_{-a}}^{\frac{s+q}{s+a}} \|x_1-x_2\|_{\X_s}^{\frac{a-q}{s+a}} 
\le R(\rho) \|T x_1 - T x_2\|_\Y^\gamma,
\end{align*}
which is valid for all $-a \le -q \le s$, for all $x_1,x_2 \in \X_s$ with $\|x_i\|_{\X_s} \le \rho$
and for constants $R(\rho) = 2 \rho^{1-\gamma} \underline c^{-\gamma}$ and $\gamma=\frac{s+q}{s+a}$.
Note that this stability estimate is strictly weaker than \eqref{eq:twosided} whenever $a>q$.
For the case $u=s$, Theorem~\ref{thm:improve} however still yields the same parameter choice
and the order optimal convergence rates as described above.

If $u > s$, then the convergence rates predicted Theorem~\ref{thm:improve} are somewhat smaller;
see also our discussion at the end of Section~\ref{sec:num_smooth}.
In view of Figure~\ref{fig:rates}, the rates are however still order optimal under the given assumptions.
Theorems~\ref{thm:improve2} and \ref{thm:improve} can therefore be understood as true
generalizations, with respect to varying $0<\gamma \le 1$, of the results about Tikhonov regularization in Hilbert scales from \cite{Natterer84} and \cite{EnglHankeNeubauer96}.

\medskip

Let us now turn to operator equations \eqref{eq:ip} with nonlinear operator $F$.
The essential condition for the analysis of Tikhonov regularization in Hilbert scales
used in \cite{Neubauer92} here reads as
\begin{align*} 
\underline c \|h\|_{\X_{-a}} \le \|F'(x) h\|_\Y \le \overline c \|h\|_{\X_{-a}} \quad \text{for all }\;\; x \in \D(F) \cap \X_s.
\end{align*}
By Taylor expansion and the triangle inequality, one can see that
\begin{align}
\underline c \|x_1&-x_2\|_{\X_{-a}} \label{eq:nonlin}
\le  \|F(x_1)-F(x_2)\|_\Y \\
&+ \|\int_0^1 [F'(x_1)-F'(x_1+t (x_1-x_2))] (x_1-x_2) dt\|_\Y. \notag
\end{align}
To obtain quantitative estimates for regularization in Hilbert scales,
one should additionally assume that the derivative is at least H\"older continuous,
i.e.,
\begin{align*}
\|[F'(x)-F'(\tilde x)]\, h\|_\Y \le C \|x-\tilde x\|_{\X_r}^\beta \|h\|_{\X_{-a}},
\end{align*}
for all $x,\tilde x \in \D(F) \cap \X_s$ and $h \in \X_{-a}$ with some $-a \le r \le s$ and some $\beta > 0$.
We refer to \cite{Egger07,Neubauer00} for details.
The last term in \eqref{eq:nonlin} can then be absorbed into the left hand side of the estimate.
This yields the stability assumption \eqref{eq:a2} with $\gamma=1$ for all $x_1,x_2 \in \X_s \cap \D(F)$
with $\|x_1\|_{\X_s},\|x_2\|_{\X_s} \le \rho$ and $\|x_1-x_2\|_{-a}$ sufficiently small.
The resulting estimates of Theorem~\ref{thm:improve2} and \ref{thm:improve} then again coincide
with the ones obtained in \cite{Neubauer92} under the nonlinearity conditions stated above.

\bigskip

Let us finally note that the stability condition \eqref{eq:a2} does in general not even imply continuity
or differentiability of the forward operator $F$. Both Theorems~\ref{thm:improve2} and \ref{thm:improve} therefore
cover rather general problems and the results of this paper can be seen as a true generalization of previous
results about regularization in Hilbert scales.

\section{Model problems} \label{sec:problems}

For illustration of the applicability of our results and as model scenarios for our numerical tests,
we consider the following two simple test problems.

\subsection{Data smoothing} \label{sec:datasmoothing}

Let $H^r_{per}(0,2\pi)=\{f(x)=\sum_{k \in \ZZ} f_k e^{i k x} : \|f\|_{r} < \infty\}$ denote the space of $2\pi$-periodic functions with bounded norm $\|f\|_r^2 = \sum_{k} (1+k^2)^r |f_k|^2$.
We consider the reconstruction of a signal $f$ from noisy measurements $f^\delta$ with noise bounded by $\|f^\delta-f\|_{-1} \le \delta$;
this allows for irregular and possibly large noise \cite{Egger08}.
The corresponding inverse problem then reads as follows:
Find $f \in H^0_{per}(0,1)$ such that the corresponding operator $T$ in the linear operator equation is defined by
\begin{align*}
T : H^0_{per}(0,2\pi) \to
H^{-1}_{per}(0,2\pi)
, \quad f \mapsto f.
\end{align*}
Note that the  reconstruction will be based on noisy observation $f^\delta$ for the
true solution $f^\dag$. Using interpolation in the frequency domain, one can see that
\begin{align*}
\|f\|_{0} \le \|f\|_{r}^{\frac{1}{r+1}} \|f\|_{-1}^{\frac{r}{r+1}},
\end{align*}
for all $r \ge 0$ and all $f \in H^r_{per}(0,2\pi)$.
By the linearity of the problem and this interpolation estimate,
we obtain for $r=1$ and for all  $f_1,f_2 \in H_{per}^2(0,2\pi)$ that
\begin{align*}
\|f_1-f_2\|_0 \le (\|f_1\|_1+\|f_2\|_1)^{1/2} \|T f_1 - T f_2\|_{-1}^{1/2}.
\end{align*}
This exactly amounts to the stability condition \eqref{eq:a2} with spaces
$\X_s=H^s_{per}(0,2\pi)$, $\Y=H^{-1}_{per}(0,2\pi)$, and parameters $a=0$, $s=1$, $\gamma=1/2$ and $R(\rho)=(2\rho)^{1/2}$.

\begin{remark}
For the data smoothing problem, we also have the two-sided estimate
\begin{align*}
\|f\|_{-1} \le \|Tf\|_\Y \le \|f\|_{-1},
\end{align*}
which amounts to \eqref{eq:twosided} with $a=1$ and which also implies \eqref{eq:a2} with this value $a$
 and $\gamma=1$.
For evaluation of our results,
we will make use of this Lipschitz stability estimate as well as of the H\"older stability estimate above
in our numerical tests.
\end{remark}

\subsection{Parameter identification} \label{sec:parameterid}

As a second model problem, we consider a coefficient inverse problem for a parabolic equation
similar to an example in \cite{HofmannYamamoto10}.
We use different boundary conditions here, which allows us to provide simpler proofs.

Let $\Omega \subset \RR^{d}$ denote a bounded sufficiently regular domain.
We consider a reaction-diffusion problem of the form
\begin{align*}
\partial_t u - \Delta u + c u &= 0 \qquad \text{in } \Omega,\\
\partial_n u &= 0 \qquad \text{on } \partial\Omega,
\intertext{for all $0 < t \le T$ and with given initial values given by}
u(\cdot,0) &= u_0 \quad \; \; \text{in } \Omega.
\end{align*}
We assume that $u_0$ is sufficiently smooth
and that the parameter $c=c(t)$ is independent of space.
The goal here is to identify this parameter $c$ from measurements of $U=\int_\Omega u(x,\cdot) dx$.
This amounts to an inverse problem $F(c)=U^\delta$ with operator
\begin{align*}
F : \D(F) \subset L^2(0,T) \to L^2(0,T), \qquad c \mapsto \int_\Omega u(x,\cdot) dx
\end{align*}
It is not difficult to see that $F$ is well-defined on $\D(F)=\{ c \in L^2(0,T) : c \ge 0\}$.
Using $U=\int_\Omega u(x,\cdot) dx$ and integrating the differential equation over $\Omega$, we obtain
\begin{align*}
U' + c U &= 0, \ t>0
\qquad \text{with} \qquad
U(0)=\int_\Omega u_0(x) dx.
\end{align*}
The solution is then given by $U(t)=U(0) e^{-\int_0^t c(s) ds}$,
which shows that $|U(t)|$ is monotonically decreasing since we assumed $c \ge 0$.
As a consequence, we obtain
\begin{align*}
e^{-\sqrt{T}\|c\|_{L^2(0,T)}} |U(0)| \le |U(t)| \le |U(0)|.
\end{align*}
In particular, $U(t) \ne 0$ whenever $U(0) \ne 0$, which we assume in the following.
Using the differential equation repeatedly, one can obtain estimates for the derivatives
\begin{align*}
\|U'\|_{L^2(0,T)} &\le |U(0)| \|c\|_{L^2(0,T)},\\
\|U''\|_{L^2(0,T)} &\le |U(0)| \big( C_T \|c\|_{H^1(0,T)} \|c\|_{L^2(0,T)} + \|c\|_{H^1(0,T)}\big).
\end{align*}
In a similar way one can estimate $\|U\|_{H^{s+1}(0,T)}$ by powers $\|c\|_{H^s(0,T)}$ for $s \ge 0$.
Now let $U_1,U_2$ denote two measurements resulting from coefficients $c_1,c_2$ but with the same
initial data $U_1(0)=U_2(0)=U(0)$.
Then the difference $W=U_1-U_2$ satisfies
\begin{align*}
W'+c_1 W &= (c_2-c_1) U_2, \ t>0
\qquad \text{and} \qquad
W(0)=0.
\end{align*}
From this equation and the uniform bounds for the functions $F(c_i)=U_i$ one can see that
$F$ is continuous and weakly closed. Using $U(0) \ne 0$ and the monotonicity of $|U(t)|$, one can further deduce that
\begin{align*}
\|c_1-c_2\|_{L^2(0,T)}
&\le \frac{1}{|U_2(T)|} \big(\|W'\|_{L^2(0,T)} + \|c_1\|_{L^2(0,T)} \|W\|_{L^\infty(0,T)} \big) \\
&\le e^{\sqrt{T} \|c_2\|_{L^2(0,T)}} (1+C_T \|c_1\|_{L^2(0,T)}) \|F(c_1)-F(c_2)\|_{H^1(0,T)}.
\end{align*}
For the last step, we used the embedding of $H^1(0,T)$ into $L^\infty(0,T)$ here.
By interpolation of Sobolev spaces, we further obtain
\begin{align*}
\|F(c_1)-F(c_2)\|_{H^1(0,T)}
\le C_s \|F(c_1)-F(c_2)\|_{L^2(0,T)}^{\frac{s}{s+1}} \|F(c_1)-F(c_2)\|_{H^{s+1}(0,T)}^{\frac{1}{s+1}},
\end{align*}
which holds for any parameter $s \ge 0$.
Using the bounds for $U_i=F(c_i)$ to estimate the term $\|F(c_1)-F(c_2)\|_{H^{s+1}(0,T)}$, we then get
\begin{align*}
\|c_1-c_2\|_{L^2(0,T)}
&\le R(\rho) \|F(c_1)-F(c_2)\|_{L^2(0,T)}^{\frac{s}{s+1}}
\end{align*}
for all $\|c_1\|_{H^s(0,T)},\|c_2\|_{H^s(0,T)} \le \rho$ and with appropriate function $R(\rho)$.
This is exactly the conditional stability estimate \eqref{eq:a2} required for our analysis.

\begin{remark}
One can see that $F$ is also differentiable with $F'(c) h=W$ defined by
\begin{align*}
W' + c W = -h U, \ t>0, \qquad W(0)=0,
\end{align*}
and with function $U=F(c)$ defined as before.
By the variation-of-constants formula, one obtains $W(t)=\int_0^t e^{-\int_s^t c(r) dr} h(s) U(s) ds$,
which shows that
\begin{align*}
\|W\|_{L^\infty(0,T)} \le \|hU\|_{L^1(0,T)} \le \|h\|_{L^1(0,T)} |U(0)|.
\end{align*}
By embedding of Sobolev spaces, one can then obtain an upper estimate
\begin{align*}
\|F'(c) h\|_{L^2(0,T)} = \|W\|_{L^2(0,T)} \le \overline{c} \|h\|_{L^2(0,T)}.
\end{align*}
From the explicit representation of the function $W=F'(c) h$ one can however see
that an estimate $\underline c \|h\|_{L^2(0,T)} \le \|F'(c) h\|_{L^2(0,T)}$ from below is certainly not valid.
Previous results on regularization in Hilbert scales are therefore not applicable directly.
\end{remark}

\section{Numerical tests} \label{sec:numerics}

We now illustrate the results Theorems~\ref{thm:improve2} and \ref{thm:improve}
by some numerical tests. As model problems, we consider the ones introduced in the previous section.

\subsection{Data smoothing} \label{sec:num_smooth}

As a first test case, we consider the problem described in Section~\ref{sec:datasmoothing}
and we utilize the Lipschitz stability condition
\begin{align} \label{eq:stabex1}
\|f_1-f_2\|_{-1} \le \|T f_1 - T f_2\|_{-1}.
\end{align}
This corresponds to \eqref{eq:a2} with $a=1$ and $\gamma=1$, and we may set
$s \ge 0$ arbitrary.
For our numerical tests, the functions $f$ are represented by piecewise linear splines
over a uniform grid of $[0,2\pi]$.
As reference solutions, we consider the functions
\begin{align*}
f^\dag(t) = \begin{cases} 0, & t<\pi, \\ 1, & t>\pi, \end{cases}
\qquad
f^\dag(t)=\sqrt{t (2\pi-t)},
\quad \text{and} \quad
f^\dag(t) = \begin{cases} t, & t<\pi, \\ 2\pi-t, & t>\pi, \end{cases}
\end{align*}
which have different regularity.
In the first case, $\fdag \in H^{u}_{per}(0,2\pi)$ for any $u < 1/2$,
in the second case, $\fdag \in H^{u}_{per}(0,2\pi)$ for all $u < 1$,
and in the third case, $\fdag \in  H^{u}_{per}(0,2\pi)$ for all $u < 3/2$.
In Table~\ref{tab:1} we list the parameter choices and convergence rates for these
three cases predicted by our theory.

\begin{table}[ht!]
\def\arraystretch{1.1}
\small
 \begin{tabular}{c||c|c|c||c|c|c}
                 & \multicolumn{3}{|c||}{$s=0$}
                 & \multicolumn{3}{|c}{$s=1$} \\
\hline
 $u$             & $\alpha$               & $\|\fad-\fdag\|_{\X_0}$   & $\|\fad-\fdag\|_{\X_1}$
                 & $\alpha$               & $\|\fad-\fdag\|_{\X_0}$   & $\|\fad-\fdag\|_{\X_1}$ \\
 \hline \hline
 $\frac{1}{2}$   & $\delta^{4/3}$         & $\delta^{1/3}$           & {\blue---}
                 & $\delta^{8/3}$         & $\blue \delta^{1/3}$     & {\blue---} \\
 \hline
 $1$             & $\delta$               & $\delta^{1/2}$           & $\blue \delta^0$
                 & $\delta^{2}$           & $\delta^{1/2}$           & $\delta^0$             \\
 \hline
 $\frac{3}{2}$   & $\delta^{4/5}$         & $\blue \delta^{3/5}$     & $\blue \delta^{1/5}$
                 & $\delta^{8/5}$         & $\delta^{3/5}$           & $\delta^{1/5}$
 \end{tabular}
\medskip
\caption{\small Parameter choice and convergence estimates of Theorem~\ref{thm:improve} for the data smoothing problem
with assumption \eqref{eq:a2} with $a=1,\gamma=1$.
The results in blue are not covered by our theory since one of the conditions $s \le u \le 2s+a$ or $-a \le r \le s$ is violated.\label{tab:1}}
\end{table}

Note that due to the restrictions $-a \le r \le s$ and $s \le u \le 2s+a$, not all results listed in the table are fully covered by our theory.
In Tables~\ref{tab:2}, we list the results obtained in our numerical tests for the a-priori parameter choice.

\begin{table}[ht!]
\def\arraystretch{1.1}
\small
 \begin{tabular}{c||c|c|c||c|c|c}
                 & \multicolumn{3}{|c||}{$s=0$}
                 & \multicolumn{3}{|c}{$s=1$} \\
\hline
 $u$             & $\alpha$                & $\|\fad-\fdag\|_{\X_0}$     & $\|\fad-\fdag\|_{\X_1}$
                 & $\alpha$                & $\|\fad-\fdag\|_{\X_0}$     & $\|\fad-\fdag\|_{\X_1}$ \\
 \hline \hline
 $\frac{1}{2}$   & $\delta^{1.33}$         & $\delta^{0.41}$            & {\blue---}
                 & $\delta^{2.67}$         & $\blue \delta^{0.37}$      & {\blue---}                    \\
 \hline
 $1$             & $\delta^{1.00}$         & $\delta^{0.53}$            & $\blue \delta^{0.04}$
                 & $\delta^{2.00}$         & $\delta^{0.49}$            & $\delta^{0.04}$        \\
 \hline
 $\frac{3}{2}$   & $\delta^{0.80}$         & $\blue \delta^{0.67}$      & $\blue \delta^{0.23}$
                 & $\delta^{1.60}$         & $\delta^{0.64}$            & $\delta^{0.21}$
 \end{tabular}
\medskip
\caption{\small Rates for a-priori parameter choice $\alpha=\delta^{2-2\gamma \frac{u-s}{u+a}}$
and reconstruction errors obtained in numerical tests for data smoothing problem with $a=1$ and $\gamma=1$.
Results in red and blue are not covered by theory.\label{tab:2}}
\end{table}

The observed convergence rates agree very well with the theoretical predictions.
Although the condition $u \le 2s+a$ is violated for the results of the last line for $s=0$,
we still observe the optimal convergence rates also in that case.
The results that are skipped correspond to negative rates or very large errors.
In Table~\ref{tab:3}, we list the corresponding results obtained with the a-posteriori parameter choice.

\begin{table}[ht!]
\def\arraystretch{1.1}
\small
 \begin{tabular}{c||c|c|c||c|c|c}
                 & \multicolumn{3}{|c||}{$s=0$}
                 & \multicolumn{3}{|c}{$s=1$} \\
\hline
 $u$             & $\alpha$              & $\|\fad-\fdag\|_{\X_0}$   & $\|\fad-\fdag\|_{\X_1}$
                 & $\alpha$              & $\|\fad-\fdag\|_{\X_0}$   & $\|\fad-\fdag\|_{\X_1}$ \\
 \hline \hline
 $\frac{1}{2}$   & $\delta^{1.26}$       & $\delta^{0.36}$          & {\blue---}
                 & $\delta^{2.59}$       & $\blue \delta^{0.36}$    & {\blue---}                    \\
 \hline
 $1$             & $\delta^{1.02}$       & $\delta^{0.48}$          & $\blue \delta^{0.02}$
                 & $\delta^{1.95}$       & $\delta^{0.48}$          & $\delta^{0.04}$        \\
 \hline
 $\frac{3}{2}$   & $\delta^{1.00}$       & $\blue \delta^{0.56}$    & $\blue \delta^{0.01}$
                 & $\delta^{1.48}$       & $\delta^{0.58}$          & $\delta^{0.19}$
 \end{tabular}
\medskip
\caption{\small Parameter choice and reconstruction errors for a-posteriori parameter choice
for data smoothing problem using \eqref{eq:a2} with $a=1,\gamma=1$.
The results in red and blue are not covered by our theory since  one of the conditions $s \le u \le 2s+a$ or $-a \le r \le s$ is violated.\label{tab:3}}
\end{table}

Also here we observe the optimal convergence rates in good agreement with the theoretical predictions of Theorem~\ref{thm:improve2}.
For the case $s=0$ and $u=3/2$, the condition $u \le 2s+a$ is violated
and we here observe a saturation phenomenon, i.e., for the choice $s=0$ smoothness higher than $u=1$
does not lead to a further improvement. This was not the case for the a-priori stopping rule.
However, one can take advantage of higher smoothness here by regularizing in a stronger norm, e.g., with $s=1$,
which restores the full convergence rate.

\bigskip

Before we proceed to the second test problem, let us make another observation.
As outlined in the previous section, we can also use the H\"older stability estimate
\begin{align} \label{eq:stabex1a}
\|f_1-f_2\|_{0} \le R(\rho) \|T f_1 - T f_2\|_{-1}^{1/2}
\end{align}
for all $f_1,f_2 \in H^1_{per}(0,2\pi)$ with $\|f_i\|_{1} \le \rho$ instead of the Lipschitz estimate \eqref{eq:stabex1}.
This amounts to the stability condition \eqref{eq:a2} with $a=0$, $\gamma=1/2$, and $s =1$.
The theoretical rates predicted by Theorems~\ref{thm:improve} and \ref{thm:improve2} are depicted in Table~\ref{tab:4}.
\begin{table}[ht!]
\def\arraystretch{1.1}
\small
 \begin{tabular}{c||c|c|c||c|c|c}
                 & \multicolumn{3}{|c||}{$s=0$}
                 & \multicolumn{3}{|c}{$s=1$} \\
\hline
 $u$             & $\alpha$          & $\|\fad-\fdag\|_{\X_0}$ & $\|\fad-\fdag\|_{\X_1}$
                 & $\alpha$          & $\|\fad-\fdag\|_{\X_0}$ & $\|\fad-\fdag\|_{\X_1}$ \\
 \hline \hline
 $\frac{1}{2}$   & $\delta$          & {\blue---}             & {\blue---}
                 & $\delta^3$        & {\blue---}             & {\blue---} \\
 \hline
 $1$             & $\delta$          & $\blue \delta^{1/2}$   & $\blue \delta^0$
                 & $\delta^{2}$      & $\delta^{1/2}$         & $\delta^0$   \\
 \hline
 $\frac{3}{2}$   & $\delta$          & $\blue \delta^{1/2}$   & $\blue \delta^{1/6}$
                 & $\delta$          & $\delta^{1/2}$         & $\delta^{1/6}$
 \end{tabular}
\medskip
\caption{Parameter choice and convergence estimates of Theorem~\ref{thm:improve} for the data smoothing problem
using \eqref{eq:a2} with $a=0,\gamma=\frac{1}{2}$.
Results in blue are not covered by our theory since one of the conditions $s \ge 1$, $s \le u \le 2s+a$, or $-a \le r \le s$ is violated. The first condition is needed here additionally for the stability condition \eqref{eq:stabex1a}.\label{tab:4}}
\end{table}

Due to the choice of the parameters $a$, $\gamma$, and $s$ involved in the conditional stability estimate,
we also obtain a  different range of applicable smoothness indices here.
Also note that the convergence rates for $u=3/2$ and $r=1$
are somewhat smaller than those obtained for $a=1$ and $\gamma=1$, which is explained in Figure~\ref{fig:2}.

\begin{figure}[ht]
\includegraphics[width=0.6\textwidth]{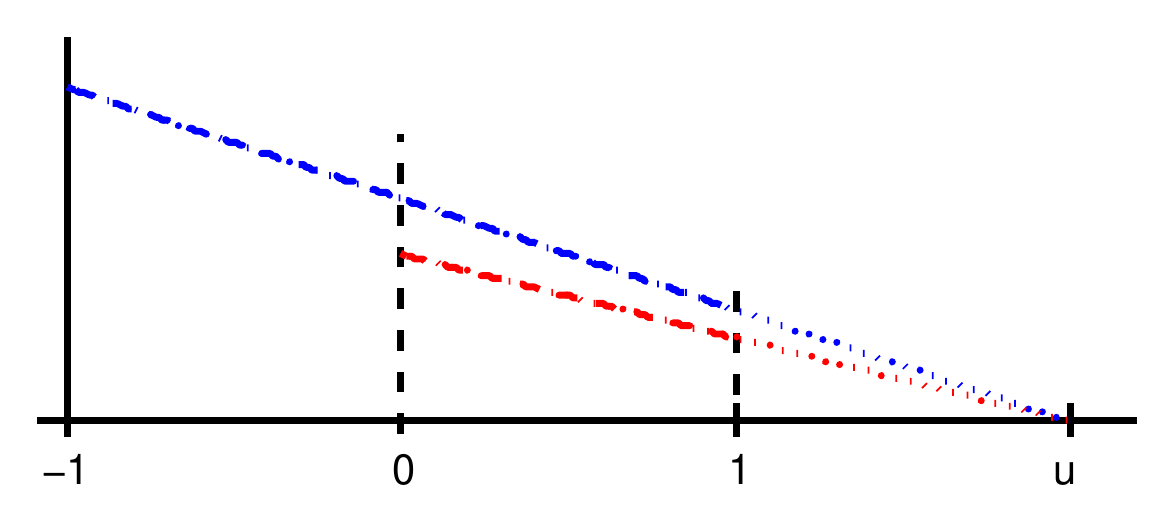}
\put(-253,73){\color{black} \small $\kappa$}
\put(-200,38){\color{red} \bf \small $a=0,\,\,\gamma=1/2$}
\put(-150,70){\color{blue}\bf \small $a=1,\gamma=1$}
\put(5,13){\color{black} \small $\bf r$}
\caption{Exponents $0 \le \kappa \le 1$ in $\|\fad-\fdag\|_{\X_r}=O(\delta^\kappa)$ as $\delta~\to~0$
corresponding to Theorem~\ref{thm:improve} for data smoothing problem
with different parameters $a=1$, $\gamma=1$ (blue) and $a=0$, $\gamma=1/2$ (red, dashed).
\label{fig:2}}
\end{figure}

As can be seen from the plot, the interpolation estimate used to derive the H\"older stability condition \eqref{eq:stabex1a}
from the Lipschitz estimate \eqref{eq:stabex1} is suboptimal when $s<u$. In addition, also the range of admissible smoothness indices $s \le u \le 2s+a$ shrinks when increasing $a$.
%
%
A general guideline for the choice of parameters $a$ and $\gamma$ in the stability condition \eqref{eq:a2} is therefore
to choose these parameters as large as possible. The parameter $s$ in the regularization terms should also be chosen
as large as possible, but small enough, such that $\xdag \in \X_u$ can be expected for some $u \ge s$.

\subsection{Parameter identification}

The parameter identification problem discussed in Section~\ref{sec:parameterid} can be phrased as
$F(c)=U^\delta$ with $F(c)=U$ where $U$ solves
\begin{align} \label{eq:U}
U'(t) + c(t) U(t) = 0, \qquad U(0) \ne 0 \quad \text{given}.
\end{align}
We set $X_0=L^2(0,T)$ with norm $\|c\|_{0}^2 = \int_0^T c(s)^2 ds$ and
and $X_2=H^2(0,T)$ with norm $\|c\|_{2}^2 = \|c\|^2_0 +  \|c''\|_{0}^2$.
For any parameter $0 \le s \le 2$, we then obtain by interpolation that
$X_s=H^s(0,T)$. As shown in the previous section, we have
\begin{align*}
\|c_1-c_2\|_{L^2(0,T)}
&\le R(\rho) \|F(c_1)-F(c_2)\|_{L^2(0,T)}^{\frac{s}{s+1}}
\end{align*}
here for all $c_1,c_2 \in H^s(0,T)$ with $c_i \ge 0$ and $\|c_1\|_{H^s(0,T)},\|c_2\|_{H^s(0,T)} \le \rho$ and with appropriate function $R(\rho)$.
In our tests, we will set $s=1$ or $s=2$.

The evaluation of $F$ requires the solution of the initial value problem \eqref{eq:U}.
For this we approximate $U$ by continuous piecewise linear functions over a regular grid of $[0,T]$,
and we represent $c$ by cubic splines over the same grid.
The problem \eqref{eq:U} is then solved by a Petrov-Galerkin method using discontinuous piecewise constant test functions.
This allows us to exactly evaluate the derivative $W=F'(c) h$ and the adjoint $d=F'(c)^* r$ on the discrete level.
For the computation of approximate minimizers for the Tikhonov functional,
we then utilize a Gau\ss-Newton method.

To evaluate the convergence behavior of our regularization strategy,
we again consider three reference solutions of different smoothness, defined by
\begin{align*}
c^\dag(t) = \begin{cases} t, & t<T/2, \\ T-t, & t>T/2, \end{cases}
\qquad
c^\dag(t)=t \sqrt{t},
\qquad \text{and} \qquad
c^\dag(t) = t(T-t).
\end{align*}
Here $c^\dag \in \X_u$ for all $u<3/2$ in the first case
and $c^\dag \in \X_u$ for all $u<2$ in the second.
Note that we only have $c^\dag \in \X_u$ for all $u<5/2$ in the third case, since $X_s \neq H^s(0,T)$ for $s > 5/2$
due to the appearance of additional boundary conditions; see \cite{Neubauer88} for details.
The convergence rates of Theorem~\ref{thm:improve} are listed in Table~\ref{tab:5}.

\begin{table}[ht!]
\def\arraystretch{1.1}
\small
 \begin{tabular}{c||c|c|c||c|c|c}
               & \multicolumn{3}{|c||}{$s=1$}
               & \multicolumn{3}{|c}{$s=2$} \\
\hline
 $u$           & $\alpha$             & $\|\cad-\cdag\|_{\X_0}$ & $\|\cad-\cdag\|_{\X_1}$
               & $\alpha$             & $\|\cad-\cdag\|_{\X_0}$ & $\|\cad-\cdag\|_{\X_1}$ \\
 \hline \hline
 $\frac{3}{2}$ & $\delta^{5/3}$       & $\delta^{1/2}$         & $\delta^{1/6}$
               & $\delta^{22/9}$      & {\blue---}             & {\blue---} \\
 \hline
 $2$           & $\delta^{3/2}$       & $\delta^{1/2}$         & $\delta^{1/4}$
               & $\delta^{2}$         & $\delta^{2/3}$         & $\delta^{1/3}$  \\
 \hline
 $\frac{5}{2}$ & $\blue \delta^{7/5}$ & $\blue \delta^{1/2}$   & $\blue \delta^{3/10}$
               & $\delta^{26/15}$     & $\delta^{2/3}$         & $\delta^{2/5}$
 \end{tabular}
\medskip
\caption{Parameter choice and convergence estimates of Theorem~\ref{thm:improve} for the parameter identification problem using
the stability condition \eqref{eq:a2} with $a=0$ and $\gamma=s/(s+1)$.
Results in blue are not covered since $s \le u \le 2s+a$ or $-a \le r \le s$ is violated.\label{tab:5}}
\end{table}

In Table~\ref{tab:6} we list the reconstruction errors obtained in our numerical tests with a-posteriori parameter choice strategy according to Theorem~\ref{thm:improve2}.

\begin{table}[ht!]
\def\arraystretch{1.1}
\small
 \begin{tabular}{c||c|c|c||c|c|c}
                 & \multicolumn{3}{|c||}{$s=1$}
                 & \multicolumn{3}{|c}{$s=2$} \\
\hline
 $u$             & $\alpha$              & $\|\fad-\fdag\|_{\X_0}$   & $\|\fad-\fdag\|_{\X_1}$
                 & $\alpha$              & $\|\fad-\fdag\|_{\X_0}$   & $\|\fad-\fdag\|_{\X_1}$ \\
 \hline \hline
 $\frac{3}{2}$   & $\delta^{1.44}$       & $\delta^{0.52}$          & $\delta^{0.17}$
                 & $\delta^{2.34}$       & $\blue \delta^{0.64}$    & $\blue \delta^{0.22}$  \\
 \hline
 $2$             & $\delta^{1.46}$       & $\delta^{0.54}$          & $\delta^{0.18}$
                 & $\delta^{2.04}$       & $\delta^{0.72}$          & $\delta^{0.37}$        \\
 \hline
 $\frac{5}{2}$   & $\blue \delta^{1.56}$ & $\blue \delta^{0.57}$    & $\blue \delta^{0.19}$
                 & $\delta^{1.65}$       & $\delta^{0.69}$          & $\delta^{0.41}$
 \end{tabular}
\medskip
\caption{\small Parameter choice and reconstruction errors for data smoothing problem using \eqref{eq:a2} with $a=1$ and $\gamma=1$ and a-posteriori parameter choice.
Results in blue are not covered by theory since one of the conditions $s \le u \le 2s+a$ or $-a \le r \le s$ is violated.\label{tab:6}}
\end{table}

Again, the results are in very good agreement with the theoretical predictions displayed in Table~\ref{tab:5}.
Similar results were also obtained for the a-priori parameter choice but they are omitted here.

\section{Discussion}

In this paper, we investigated Tikhonov regularization in Hilbert scales under a conditional stability assumption
for the considered inverse problem. Optimal convergence rates were established for a-priori
and a-posteriori parameter choice strategies. Apart from the conditional stability estimate, no
further assumptions on the continuity or differentiability of the operator $F$ were required.

For the statement of our main results, we utilized here the framework of Hilbert scales.
This allowed us to keep the presentation compact and to discuss in detail the relation
to previous work. An extension of the analysis to Banach scales seems to be possible
and will be a topic of future research.

Conditional stability estimates have been used recently for the convergence analysis of Landweber iteration \cite{deHoopQiuScherzer12}.
It has been observed there that a H\"older stability condition \eqref{eq:a2} with $\gamma>1/2$ together with the usual continuity assumptions
already implies the tangential cone condition.
Stability conditions for linear or linearized problems have also been used for the convergence analysis of iterative
regularization methods in Hilbert scales; see e.g. \cite{Egger07,Neubauer00}.
An extension of such a convergence analysis under weaker H\"older stability conditions, which have been utilized in this paper, may be possible and should also be addressed in the future.

\section*{Acknowledgments}
This work was initiated at the Workshop ``Inverse Problems in the Alps'' organized by O. Scherzer and R. Ramlau from RICAM, Vienna/Linz and supported by the Austrian Science Foundation (FWF). Both authors further gratefully acknowledge support by the German Research
Foundation (DFG) via grants IRTG~1529 and TRR~154 (Herbert Egger) as well as HO 1454/10-1 and 12-1 (Bernd Hofmann).
Moreover the first author was supported by the ``Excellence Initiative'' of the German Federal and State Governments via the Graduate School of Computational Engineering GSC~233 at TU Darmstadt.


\end{document}